\newif\iffinal
\DeclareFontFamily{OT1}{pzc}{}
\DeclareFontShape{OT1}{pzc}{m}{it}{<-> s * [1.10] pzcmi7t}{}
\DeclareMathAlphabet{\mathpzc}{OT1}{pzc}{m}{it}
\LetLtxMacro\todonotestodo\todo
\renewcommand{\todo}[2][]{\todonotestodo[backgroundcolor=yellow, #1]{TODO: {#2}}}
\iffinal\renewcommand{\todo}[2][]{}\fi
\renewcommand{\gitMark}{\jobname\,\textbullet{}\,\gitFirstTagDescribe\,\textbullet{}\,\gitAuthorName,\,\gitAuthorIsoDate}
\newtheorem{proposition}{Proposition}[section]
\newtheorem{theorem}[proposition]{Theorem}
\newtheorem{corollary}[proposition]{Corollary}
\newtheorem{lemma}[proposition]{Lemma}
\theoremstyle{definition}
\newtheorem{definition}[proposition]{Definition}
\newtheorem{example}[proposition]{Example}
\newtheorem{remark}[proposition]{Remark}
\DeclareMathOperator{\divclass}{\mathpzc{DivG}}
\DeclareMathOperator{\IIclass}{\mathpzc{InG}}
\DeclareMathOperator{\poclass}{\mathpzc{PoG}}
\DeclareMathOperator{\pdgclass}{\mathpzc{PDG}}
\DeclareMathOperator{\matclass}{\mathpzc{MatG}}
\DeclareMathOperator{\idemclass}{\mathpzc{IdemG}}
\DeclareMathOperator{\boolclass}{\mathpzc{BoolG}}
\DeclareMathOperator{\coneclass}{\mathpzc{-ConeG}}
\DeclareMathOperator{\diam}{diam}
\DeclareMathOperator{\pdg}{PDG}
\DeclareMathOperator{\girth}{g}
\newcommand{\first}[1]{\{1,\ldots, #1\}}
\DeclareMathOperator{\pos}{pos}
\DeclareMathOperator{\dir}{dir}
\DeclareMathOperator{\In}{In}
\DeclareMathOperator{\Div}{Div}
\DeclareMathOperator{\Idm}{Idm}
\DeclareMathOperator{\Mat}{MatG}
\DeclareMathOperator{\cone}{ConeG}
\newcommand{\calC}{\mathcal{C}}
\newcommand{\N}{\mathbb{N}}
\newcommand{\Z}{\mathbb{Z}}
\newcommand{\RamseyR}{\mathcal{R}}
\newcommand{\RamseyC}{\mathcal{R}_\calC}
\newcommand{\poR}{\RamseyR_{\poclass}}
\newcommand{\inR}{\RamseyR_{\IIclass}}
\newcommand{\pdgR}{\RamseyR_{\pdgclass}}
\newcommand{\divR}{\RamseyR_{\divclass}}
\newcommand{\matR}{\RamseyR_{\matclass}}
\newcommand{\idmR}{\RamseyR_{\idemclass}}
\newcommand{\booR}{\RamseyR_{\boolclass}}
\newcommand{\coneR}{\RamseyR_{k\!\!\;\coneclass}}
\newcommand{\coneZ}{\RamseyR_{3\!\!\;\coneclass}}
\newcommand{\units}[1]{U(#1)}
\author{Ayman Badawi}
\address{College of Arts and Sciences,
Department of Mathematics and Statistics \\ American University of Sharjah\\ Sharjah, UAE
}
\email{\href{mailto:abadawi@aus.edu}{abadawi@aus.edu}}
\author{Roswitha Rissner}
\address{Institut für Mathematik\\Alpen-Adria-Universität Klagenfurt\\
  Universitätsstraße 65-67\\9020 Klagenfurt am Wörthersee\\Austria}
\email{\href{mailto:roswitha.rissner@aau.at}{roswitha.rissner@aau.at}}
\title[Partial order graphs]{Ramsey numbers of partial order graphs
  (comparability graphs) and implications in ring theory}
\keywords{Ramsey number, partial order, partial order graph, inclusion graph}
\subjclass[2010]{13A15, 06A06, 05CXX, 05D10}
\begin{document}

\begin{abstract}
  For a partially ordered set $(A, \le)$, let $G_A$ be the simple,
  undirected graph with vertex set $A$ such that two vertices
  $a \neq b\in A$ are adjacent if either $a \le b$ or $b \le a$.  We
  call $G_A$ the \emph{partial order graph} or {\emph{comparability
      graph}} of $A$. Further, we say that a graph $G$ is a partial
  order graph if there exists a partially ordered set $A$ such that
  $G = G_A$.  For a class $\calC$ of simple, undirected graphs and
  $n$, $m \ge 1$, we define the \emph{Ramsey number} $\RamseyC(n,m)$
  with respect to $\calC$ to be the minimal number of vertices $r$
  such that every induced subgraph of an arbitrary graph in $\calC$
  consisting of $r$ vertices contains either a complete $n$-clique
  $K_n$ or an independent set consisting of $m$ vertices.

  In this paper, we determine the Ramsey number with respect to some
  classes of partial order graphs. Furthermore, some implications of
  Ramsey numbers in ring theory are discussed.

\end{abstract}

\maketitle

\section{Introduction}
The Ramsey number $\RamseyR(n, m)$ gives the solution to the party
problem, which asks for the minimum number $\RamseyR(n, m)$ of guests
that must be invited so that at least $n$ will know each other or at
least $m$ will not know each other. In the language of graph theory,
the Ramsey number is the minimum number of vertices $v=\RamseyR(n,m)$
such that all undirected simple graphs of order $v$ contain a clique
of order $n$ or an independent set of order $m$. There exists a
considerable amount of literature on Ramsey numbers. For example,
Greenwood and Gleason~\cite{Greenwood-Gleason:1955:chromatic} showed
that $\RamseyR(3, 3) = 6$, $\RamseyR(3, 4) = 9$ and
$\RamseyR(3, 5) = 14$; Graver and
Yackel~\cite{Graver-Yackel:1968:graphramsey} proved that
$\RamseyR(3, 6) = 18$;
Kalbfleisch~\cite{Kalbfleisch:1966:chromaticramsey} computed that
$\RamseyR(3, 7) = 23$; McKay and Min~\cite{McKay-Min:1992;ramsey}
showed that $\RamseyR(3, 8) = 28$ and Grinstead and
Roberts~\cite{Grinstead-Roberts:1982:ramsey} determined that
$\RamseyR(3, 9) = 36$.

A summary of known results up to 1983 for $\RamseyR(n,m)$ is given in
Chung and Grinstead~\cite{CG}.  An up-to-date-list of the best
currently known bounds for generalized Ramsey numbers (multicolor
graph numbers), hypergraph Ramsey numbers, and many other types of
Ramsey numbers is maintained by Radziszowski~\cite{Ra}.

In this paper, we determine the Ramsey number of partial order
graphs. We want to point out that recently, a colleague kindly made us
aware that such graphs in literature are also known as {\emph
  comparability graph} and our result Theorem~\ref{thm:poramsey} is a
consequence of \cite[Theorem 6]{reff} (also see~\cite[Corollary
1]{reff}). However, our proof of Theorem~\ref{thm:poramsey} is
self-contained and it is completely different from the proof in
\cite{reff}. Our proof solely relies on the pigeon-hole principal. For
a partially ordered set $(A, \le)$, let $G_A$ be the simple,
undirected graph with vertex set $A$ such that two vertices
$a \neq b\in A$ are adjacent if either $a \le b$ or $b \le a$.  We
call $G_A$ the \emph{partial order graph} (\emph{comparability graph})
of $A$. In this paper, we will just use the name partial order
graph. Further, we say that a graph $G$ is a partial order graph if
there exists a partially ordered set $A$ such that $G = G_A$. For a
class $\calC$ of simple, undirected graphs and $n$, $m \ge 1$, we
define the \emph{Ramsey number} $\RamseyC(n,m)$ with respect to the
class $\calC$ to be the minimal number of vertices $r$ such that every
induced subgraph of an arbitrary graph in $\calC$ consisting of $r$
vertices contains either a complete $n$-clique $K_n$ or an independent
set consisting of $m$ vertices.

Next, we remind the readers of the graph theoretic definitions that
are used in this paper. We say that a graph $G$ is {\it connected} if
there is a path between any two distinct vertices of $G$. For vertices
$x$ and $y$ of $G$, we define {\it $d(x, y)$} to be the length of a
shortest path from $x$ to $y$ ($d(x, x) = 0$ and $d(x, y) = \infty$ if
there is no such path). The {\it diameter} of $G$ is
$\diam(G) = \sup\{d(x, y) \mid x\text{ and }y \text{ are vertices of }
G\}$. The {\it girth} of $G$, denoted by $\girth(G)$, is the length of
a shortest cycle in $G$ ($\girth(G) = \infty$ if $G$ contains no
cycles).  We denote the {\it complete graph} on $n$ vertices or
\emph{$n$-clique} by $K_n$ and the {\it complete bipartite} graph on
$m$ and $n$ vertices by $K_{m,n}$. The {\it clique number} $\omega(G)$
of $G$ is the largest positive integer $m$ such that $K_m$ is an
induced subgraph of $G$. The {\it chromatic number} of $G$, $\chi(G)$,
is the minimum number of colors needed to produce a proper coloring of
$G$ (that is, no two vertices that share an edge have the same
color). The {\it domination number} of $G$, $\gamma(G)$, is the
minimum size of a set $S$ of vertices of $G$ such that each vertex in
$G\setminus S$ is connected by an edge  to at least one vertex in $S$ by an
edge. An {\it independent vertex } set of $G$ is a subset of the
vertices such that no two vertices in the subset are connected by an
edge of $G$.  For a general reference for graph theory we refer to
Bollob\'{a}s' textbook~\cite{BB}.

In Section~\ref{sec:pographs} we show that the Ramsey number
$\poR(n,m)$ for the class  $\poclass$ of partial order graphs equals
$(n-1)(m-1)+1$, see~Theorem~\ref{thm:poramsey}. In
Section~\ref{sec:subclasses} we study subclasses of partial order
graphs that appear in the context of ring theory. Among other results,
we show that for the classes $\pdgclass$ of perfect divisor graphs,
$\divclass$ of divisibility graphs, $\IIclass$ of inclusion ideal
graphs, $\matclass$ of matrix graphs and $\idemclass$ of idempotents
graphs of rings, the respective Ramsey numbers equal to $\poR$,
see~Theorems~\ref{thm:pdg-ramsey}, \ref{thm:divisibility},
\ref{thm:inclusion}, \ref{matrices} and \ref{idm-graph}, respectively.
In Section~\ref{sec:unsymmetric} we a present a subclass of partial
ordered graphs with respect to which the Ramsey numbers are
non-symmetric.
	
Throughout this paper, $\Z$ and $\Z_n$ will denote the integers and
integers modulo $n$, respectively.  Moreover, for a ring $R$ we assume
that $1\neq 0$ holds, $R^\bullet=R \setminus \{0\}$ denotes the set of
non-zero elements of $R$ and $\units{R}$ denotes the group of units of
$R$.

\section{Ramsey numbers of partial order graphs}\label{sec:pographs}

\begin{definition}\label{def:pographs}
  \begin{enumerate}
  \item For a partially ordered set $(A, \le)$, let $G_A$ be the
    simple, undirected graph with vertex set $A$ such that two
    vertices $a \neq b\in A$ are adjacent if either $a \le b$ or
    $b \le a$.  We call $G_A$ the \emph{partial order graph} of
    $A$. Further, we say that $G$ is a partial order graph if there
    exists a partially ordered set $A$ such that $G = G_A$.  By
    $\poclass$ we denote the \emph{class of all partial order graphs}.
  \item For a class $\calC$ of simple, undirected graphs and $n$,
    $m \ge 1$, we set $\RamseyC(n,m)$ to be the minimal number of
    vertices $r$ such that every induced subgraph of an arbitrary
    graph in $\calC$ consisting of $r$ vertices contains either a
    complete $n$-clique $K_n$ or an independent set consisting of $m$
    vertices. We call $\RamseyC$ the \emph{Ramsey number with respect
      to the class~$\calC$}.
  \end{enumerate}
\end{definition}

\begin{theorem}\label{thm:poramsey}
  Let $n$, $m\ge 1$ ($n$, $m$ need not be distinct). Then for the
  Ramsey number $\poR$ with respect to the class $\poclass$ of
  partial order graphs, the following equality holds
  \begin{equation*}
    \poR(n,m) = \poR(m,n) = (n-1)(m-1)+1.
  \end{equation*}
\end{theorem}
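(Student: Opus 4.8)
The plan is to translate the whole statement into the language of the underlying poset. For a partially ordered set $(A,\le)$, a subset of $A$ induces a clique in $G_A$ exactly when its elements are pairwise comparable (a chain), and it induces an independent set exactly when its elements are pairwise incomparable (an antichain). Moreover, if $B\subseteq A$, then $(B,\le)$ is again a poset whose partial order graph coincides with the subgraph of $G_A$ induced on $B$; hence every induced subgraph of a partial order graph is itself a partial order graph. Consequently, $\poR(n,m)$ equals the smallest $r$ for which every poset on $r$ elements contains a chain with $n$ elements or an antichain with $m$ elements. Since the proposed value $(n-1)(m-1)+1$ is symmetric in $n$ and $m$, the equality $\poR(n,m)=\poR(m,n)$ follows automatically once the value is established, so it suffices to prove the matching upper and lower bounds on this combinatorial quantity.

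For the upper bound, let $(A,\le)$ be an arbitrary poset on $r=(n-1)(m-1)+1$ elements and suppose it contains no chain of size $n$. For each $a\in A$ define $h(a)$ to be the maximal number of elements in a chain whose largest element is $a$; by assumption $h(a)\in\{1,\ldots,n-1\}$. The key observation is that each level set $h^{-1}(k)$ is an antichain: if $a<b$ lay in the same level, then appending $b$ to a longest chain ending in $a$ would produce a chain ending in $b$ with $h(a)+1>h(a)=h(b)$ elements, a contradiction. Since there are only $n-1$ admissible values of $h$ and strictly more than $(n-1)(m-1)$ elements, the pigeonhole principle forces some level set to contain at least $m$ elements, yielding an antichain of size $m$. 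This Mirsky-type level decomposition is the heart of the argument and is essentially the only place where genuine work is required.

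For the lower bound, I would exhibit a single poset on $(n-1)(m-1)$ elements realizing neither a chain of size $n$ nor an antichain of size $m$, showing that $r=(n-1)(m-1)$ does not suffice. Take the disjoint union of $m-1$ chains, each having $n-1$ elements, with elements in distinct chains declared incomparable. Any chain lies within a single component and so has at most $n-1$ elements, while any antichain meets each component in at most one element and so has at most $m-1$ elements. Its partial order graph therefore belongs to $\poclass$, has clique number $n-1$ and independence number $m-1$, and hence contains neither $K_n$ nor an independent set of size $m$. Combining the two bounds gives $\poR(n,m)=(n-1)(m-1)+1$. The main obstacle is simply verifying that the level-set partition in the upper bound really produces antichains; the reduction to posets and the extremal construction are routine.
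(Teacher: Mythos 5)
Your proof is correct and follows essentially the same route as the paper: your height function $h$ is exactly the paper's $\pos_H$ (shifted by one), and the level-set-plus-pigeonhole argument for the upper bound is identical, the paper merely phrasing chains and antichains as directed paths and independent sets in an auxiliary digraph. The only difference is the extremal example for the lower bound, where you take a disjoint union of $m-1$ chains of length $n-1$ while the paper takes the dual poset (an ordered stack of $n-1$ antichains of size $m-1$, i.e.\ a complete $(n-1)$-partite graph); both work equally well.
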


\begin{proof}
   First, we prove that $\poR(n,m)> (n-1)(m-1)$. Let $A$ be a set of
  cardinality $(n-1)(m-1)$ and $A_1$, \ldots, $A_{n-1}$ an arbitrary
  partition of $A$ into $n-1$ subsets each of cardinality
  $m-1$. Further, for $a$, $b\in A$, we say $a \preceq b$ if and only
  if $a=b$ or $a\in A_i$ and $b\in A_j$ with $i<j$. Then $\preceq$ is
  a partial order on $A$ and the partial order graph $G_A$ is a
  complete $(n-1)$-partite graph in which each partition has $m-1$ independent
  vertices. It is easily verified that the clique number of $G_A$ is
  $n-1$ and at exactly  $m-1$ vertices of $G_A$ are independent.

  Let $G$ be a partial order graph and $H$ an induced subgraph. We
  show that if $H$ contains $(n-1)(m-1)+1$ vertices, then $H$ contains
  either an $n$-clique $K_n$  or an independent set of $m$ vertices.

  Let $G^{\dir}$ be the directed graph with the same vertex set as $G$
  such that $(a,b)$ is an edge if $a\neq b$ and $a \le b$. Then
  $H^{\dir}$ (the subgraph of $G^{\dir}$ induced by the vertices of
  $H$) contains a directed path of length $n$ if and only if $H$
  contains an $(n + 1)$-clique $K_{n+1}$.

  Note that $G^{\dir}$ does not contain a directed cycle. This
  allows us to define $\pos_H(a)$ to be the maximal length of a directed
  path in $H^{\dir}$ with endpoint $a$ for a vertex $a$ of $H$.

  It is easily seen, that $\pos_H(b) \le \pos_H(a) -1$ for every edge
  $(b,a)$ in $H^{\dir}$. In particular, if for two vertices $a$, $b$
  of $H$, $\pos_H(a) = \pos_H(b)$, then the two vertices are
  independent in $H$.

  Moreover, a straight-forward argument shows that $H$ contains an
  $n$-clique $K_n$  if and only if there exists a vertex $a$ in $H$ with
  $\pos_H(a) \geq n-1$.

  Now, assume that $H$ does not contain an $n$-clique $K_n$. This
  implies that $\pos_H(a) < n-1$ for all vertices $a$ in $H$. It then
  follows by the pigeonhole principle that among the $(n-1)(m-1) +1$
  vertices in $H$, there are at least $m$ vertices $a$ with
  $\pos_H(a) = k$ for some k, $0\le k\le n-2$. Therefore $H$ contains
  $m$ independent vertices.

  Since $(n-1)(m-1)+1$ is symmetric in $n$ and $m$, it further follows
  that $\poR(n, m) =\poR(m,n)$.
\end{proof}

\section{Subclasses of partial order graphs that appear in ring
  theory}\label{sec:subclasses}

In this section we discuss subclasses of partial order graphs that
appear in the context of ring theory. In particular, we focus on the
implications of Theorem~\ref{thm:poramsey}. Recall
for a class $\calC$ of graphs, $\RamseyC$ denotes the Ramsey number
with respect to $\calC$, cf.~Definition~\ref{def:pographs}.

\subsection{Perfect divisor graphs}\label{subsec:pdg}

\begin{definition}
  Let $R$ be a commutative ring, $n\in\N_{\ge 2}$ and
  $S = \{m_1, \ldots, m_n\} \subseteq R^\bullet\setminus \units{R}$ be
  a set of $n$ pairwise coprime non-zero non-units and
  $m = m_1m_2 \cdots m_n$. (Note that $m=0$ is possible.)
  \begin{enumerate}
  \item We say $d$ is a \emph{perfect divisor} of $m$ with respect to
    $S$ if $d \not = m$ and $d$ is a product of distinct elements of
    $S$.
  \item The \emph{perfect divisor graph} $\pdg(S)$ of $S$ is defined
    as the simple, undirected graph $(V,E)$ where
    $V= \{d \mid d \text{ perfect-divisor of } m\}$ is the vertex set
    and for two vertices $a \neq b\in V$, $(a,b) \in E$ if and only if
    $a \mid b$ or $b \mid a$.
  \item By $\pdgclass$ we denote the \emph{class of all perfect divisor
    graphs}.
  \end{enumerate}
\end{definition}

\begin{lemma}\label{lem:pd-order}
  Let $R$ be a commutative ring, $n\in\N_{\ge 2}$ and
  $S = \{m_1, \ldots, m_n\} \subseteq R^\bullet\setminus \units{R}$ be
  a set of $n$ pairwise coprime non-zero non-units and
  $m = m_1m_2 \cdots m_n$. Further, let
  \begin{equation*}
    V = \{d \mid d \text{ perfect divisor of } m \text{ with respect to
    }S\}
  \end{equation*}
  and define $\leq$ on $V$ such that for all $a$, $b \in V$, we have
  $a\leq b$ if and only if $a = b$ or $a \mid b$.

  Then $(V, \leq)$ is a partially ordered set of cardinality
  $|V| = 2^n - 2$ and $\pdg(S)$ is a partial order graph.
\end{lemma}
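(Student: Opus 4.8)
The plan is to verify three things in sequence: that $\leq$ is a partial order on $V$, that $|V| = 2^n - 2$, and that $\pdg(S)$ coincides with the partial order graph $G_V$ of this poset. The last point is almost immediate from the definitions, so the substance lies in the first two.

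First I would check that $(V,\leq)$ is a partially ordered set. Reflexivity holds by the clause $a = b$. For antisymmetry and transitivity the key observation is that the pairwise coprimality of $m_1, \ldots, m_n$ lets us identify each perfect divisor $d$ with the subset $T_d \subseteq S$ whose product is $d$; the coprimality ensures this subset is uniquely determined by $d$ (two distinct subsets give genuinely different divisors, not merely associates), so that divisibility $a \mid b$ among perfect divisors corresponds exactly to the inclusion $T_a \subseteq T_b$. Since subset inclusion is a partial order, antisymmetry and transitivity of $\leq$ follow. I expect this correspondence between perfect divisors and subsets of $S$ to be the crux of the argument, and the main obstacle is justifying it carefully: one must argue that $d \mid d'$ holds if and only if $T_d \subseteq T_{d'}$, which uses that the $m_i$ are non-units (so that distinct subsets really do yield distinct, non-comparable-unless-nested divisors) and pairwise coprime (so that $m_i \mid m_{j_1}\cdots m_{j_k}$ forces $i$ to be among the $j$'s).

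Next I would count $|V|$. A perfect divisor is a product of distinct elements of $S$ that is not equal to $m$ itself. Under the subset correspondence, products of distinct elements of $S$ biject with subsets of $S$, of which there are $2^n$. The full product $m$ corresponds to the subset $S$ and is excluded by definition, removing one. I would also need to account for the empty product: the definition says $d$ is a product of distinct elements of $S$, and I would read the empty product as giving the multiplicative identity, which is a unit and hence should be treated consistently with the stated formula. To land on $2^n - 2$ I would exclude both the empty subset and the full subset $S$, and I would make explicit in the write-up that the empty product (the unit $1$) is not counted as a perfect divisor, matching the count of $2^n - 2$ subsets that are neither empty nor all of $S$.

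Finally, to conclude that $\pdg(S)$ is a partial order graph, I would observe that $\pdg(S)$ and $G_V$ share the same vertex set $V$, and that their adjacency relations agree: in $\pdg(S)$, $a \neq b$ are adjacent iff $a \mid b$ or $b \mid a$, while in $G_V$ they are adjacent iff $a \leq b$ or $b \leq a$, and these conditions coincide by the definition of $\leq$. Hence $\pdg(S) = G_V$, which exhibits $\pdg(S)$ as the partial order graph of $(V,\leq)$ and completes the proof.
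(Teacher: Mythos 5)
Your proposal is correct and follows essentially the same route as the paper: the crux in both is that, by pairwise coprimality and the non-unit hypothesis, $m_i$ divides a perfect divisor $d=\prod_{j\in J}m_j$ exactly when $i\in J$, which identifies $V$ with the nonempty proper subsets of $\{1,\ldots,n\}$ and yields antisymmetry and the count $2^n-2$. The only step you leave implicit is the computation behind that crux (multiplying the relations $a_jm_j+b_jm_i=1$ over $j\in J$ to obtain $1=ad+cm_i$, whence $m_i\nmid d$ because $m_i$ is a non-unit), which is precisely what the paper's proof writes out.
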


\begin{proof} The relation $\le$ clearly is reflexive and transitive,
  we prove that it is also antisymmetric.  Let $d \in V$ be a perfect
  divisor of $m$ with respect to $S$. Then $d = \prod_{j \in J} m_j$
  for $\emptyset \neq J \subseteq \first{n}$. We show that for every
  $1\le i \le m$, $m_i\mid d$ if and only if $i\in J$.

  Obviously if $j\in J$, then $m_i\mid d$. Let us assume that
  $i \in \first{n} \setminus J$. Then by hypothesis, for $j\in J$
  there are elements $a_j$ and $b_j \in R$ such that
  $a_jm_{j} + b_jm_i = 1$ holds. Hence
  \begin{equation*}
    1 = \prod_{j\in J} (a_jm_j + b_jm_i)
    = \left(\prod_{j\in J} a_jm_{j}\right) + cm_i = ad +cm_i
  \end{equation*}
  for some $a$, $c \in R$. Therefore, $d$ and $m_i$ are coprime elements of
  $R$ which in particular implies that $m_i \nmid d$.

  It follows that if $d_1$ and $d_2$ are distinct
  perfect divisors of $m$ and $d_1 \mid d_2$, then $d_2 \nmid
  d_1$. Thus $(V, \leq)$ is a partially ordered set.

  Moreover, it follows that the elements in $V$ correspond to the
  non-empty proper subset of $\first{n}$. Therefore, their number amounts to
  \begin{equation*}
    |V| = |\{\emptyset \neq J\subsetneq \first{n}\}| = \sum_{i=1}^{n-1}\binom{n}{i} = 2^n-2.
  \end{equation*}

\end{proof}

\begin{theorem}\label{thm:properties-pdg}
  Let $R$ be a commutative ring, $n\in\N_{\ge 2}$ and
  $S = \{m_1, \ldots, m_n\} \subseteq R^\bullet\setminus \units{R}$ be
  a set of $n$ pairwise coprime non-zero non-units,
  $m = m_1m_2 \cdots m_n$ and $\pdg(S)$ the perfect divisor graph of
  $m$ with respect to $S$.

  Then the following assertions hold:
  \begin{enumerate}
  \item\label{pdg:connected} $\pdg(S)$ is a connected graph if and only if $n\ge 3$.
  \item\label{pdg:diameter} If $n\ge 3$, then the diameter $\diam(\pdg(S)) = 3$.
  \item\label{pdg:dominating} The domination number of $\pdg(S)$ is
    equal $2$ if $n\geq 2$ and equal 1 if $n=1$.
  \item\label{pdg:partite} If $n \ge 3$, then the vertices in
    $P_k= \{\prod_{j\in J}m_j \mid |J| = k \}$ for $1 \le k \le n-1$
    are pairwise not connected by an edge. In particular, $\pdg(S)$ is
    an $(n-1)$-partite graph.
  \item\label{pdg:degree} If
    $a \in P_k = \{\prod_{j\in J}m_j \mid |J| = k \}$ for
    $1\le k\le n-1$, then $\deg(a) = 2^{k} + 2^{n-k} - 4$.
  \item\label{pdg:girth} If $n\ge 3$, then for the girth of $\pdg(S)$
    the following holds
    \begin{equation*}
      \girth(\pdg(S)) =
      \begin{cases}
        6 & n=3 \\
        3 & n\ge 4
      \end{cases}
    \end{equation*}
  \item\label{pdg:planar} $\pdg(S)$ is planar if and only if
    $n \in \{3,4\}$.
  \end{enumerate}
\end{theorem}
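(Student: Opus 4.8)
The common thread is to pass, via Lemma~\ref{lem:pd-order}, to a purely combinatorial model: the map $J \mapsto \prod_{j\in J} m_j$ identifies the vertices of $\pdg(S)$ with the non-empty proper subsets of $\first{n}$, and two vertices are adjacent exactly when the corresponding subsets are comparable under inclusion. Every assertion thereby becomes a statement about the comparability graph of the inclusion order on $\{\emptyset\neq J\subsetneq\first{n}\}$, independent of $R$ and $S$, and I would prove each in this model.

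The counting assertions come first. For~\eqref{pdg:partite}, two distinct subsets of equal cardinality are incomparable, so each $P_k$ is an independent set; since $P_1,\ldots,P_{n-1}$ partition the vertices by cardinality, this exhibits the $(n-1)$-partition. For~\eqref{pdg:degree}, the neighbours of a set $J$ with $|J|=k$ are precisely the non-empty proper subsets strictly below or strictly above $J$, of which there are $(2^k-2)$ and $(2^{n-k}-2)$ respectively. For connectivity~\eqref{pdg:connected} I would use singletons as hubs: each $J$ is comparable to $\{i\}$ for any $i\in J$, and for $n\ge 3$ any two singletons $\{i\},\{j\}$ are linked through the proper set $\{i,j\}$; for $n=2$ the two vertices $\{1\},\{2\}$ are incomparable, so the graph is disconnected. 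For~\eqref{pdg:dominating}, the pair $\{\{1\},\ \first{n}\setminus\{1\}\}$ dominates (every vertex containing $1$ is comparable to $\{1\}$, every vertex avoiding $1$ is contained in $\first{n}\setminus\{1\}$), while no single vertex can dominate since every vertex has an incomparable partner; the value $1$ at $n=1$ is the degenerate boundary convention.

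For the diameter~\eqref{pdg:diameter} I would show $\diam \le 3$ by cases: if $I\cap J\neq\emptyset$, route through a common singleton (distance $\le 2$); if $I\cup J\neq\first{n}$, route through $I\cup J$ (distance $\le 2$); in the remaining case $J=\first{n}\setminus I$, an explicit path such as $I-\{i\}-(J\cup\{i\})-J$ has length $3$ (the middle set is proper since then $|I|\ge 2$, with a symmetric path if $|I|=1$). Equality is witnessed by $\{1\}$ and $\first{n}\setminus\{1\}$, which have no common neighbour. For the girth~\eqref{pdg:girth}, a triangle is a $3$-element chain; for $n\ge 4$ the chain $\{1\}\subsetneq\{1,2\}\subsetneq\{1,2,3\}$ consists of proper subsets, so $\girth=3$, whereas for $n=3$ the graph is bipartite between the three singletons and three pairs with $\{i\}$ non-adjacent only to its complementary pair, i.e.\ $K_{3,3}$ minus a perfect matching, which is $C_6$; one checks no $4$-cycle exists (two distinct pairs cannot share two elements), so $\girth=6$.

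The main obstacle is planarity~\eqref{pdg:planar}. Summing the degrees from~\eqref{pdg:degree} gives $|E| = 3^n - 3\cdot 2^n + 3$. For $n\ge 5$ one has $|E| > 3|V|-6 = 3\cdot 2^n - 12$ (equivalently $3^n+15 > 6\cdot 2^n$), so $\pdg(S)$ is non-planar by Euler's formula. The graph is $C_6$ for $n=3$, hence planar. The delicate value is $n=4$, where $|V|=14$ and $|E|=36=3|V|-6$: the Euler inequality is met with equality and cannot decide planarity, so any planar drawing must be a triangulation. I would settle this by producing an explicit crossing-free embedding of the inclusion graph on the non-empty proper subsets of $\first{4}$, using the complementation symmetry $J\mapsto\first{4}\setminus J$ (which fixes the six pairs and interchanges singletons with triples) to organise the picture, and then verifying that the resulting faces account for all $36$ edges. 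Constructing and checking this embedding is the step I expect to require the most care.
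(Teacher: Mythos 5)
Your proposal is correct, and while it establishes the same facts it takes a genuinely different route in several places. You first transport everything to the comparability graph of the non-empty proper subsets of $\first{n}$ under inclusion (justified by Lemma~\ref{lem:pd-order}); the paper instead argues directly with divisors, which amounts to the same thing but makes some steps wordier. Your connectivity argument routes through singletons $\{i\}$ and the pair $\{i,j\}$, whereas the paper does a case analysis on whether the two index sets meet and exhibits a length-$3$ path $a-am_k-m_k-b$; both work, and the paper's version has the side benefit of directly yielding $\diam\le 3$. For the lower bounds you are actually more careful than the paper: for the domination number you observe that every vertex has an incomparable partner (the paper only exhibits the dominating pair and never rules out a single dominating vertex), and for the diameter you verify that $\{1\}$ and $\first{n}\setminus\{1\}$ have no common neighbour (the paper merely asserts its length-$3$ path is shortest). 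The most substantive divergence is planarity for $n\ge 5$: you use the edge count $|E|=3^n-3\cdot 2^n+3$ against Euler's bound $3|V|-6=3\cdot 2^n-12$, which is a clean global argument, while the paper exhibits an explicit $K_{3,3}$ subgraph on $\{m_1,m_2,m_1m_2\}$ versus $\{m_1m_2m_3,m_1m_2m_4,m_1m_2m_5\}$ and invokes Kuratowski; both are valid, and the paper's witness is more concrete. The one step you describe but do not execute is the planar embedding for $n=4$ (where, as you correctly note, $|E|=3|V|-6$ so Euler is inconclusive and any embedding must be a triangulation); the paper settles this exactly as you propose, by displaying an explicit crossing-free drawing (its Figure~\ref{fig:pdg-n4}), so this is a genuine but fillable omission rather than a flaw in the approach.
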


\begin{proof}
  \eqref{pdg:connected}: If $n=2$, then $V$ consists of 2 vertices
  $m_1$ and $m_2$ which are coprime and hence not connected.  Assume
  $n\ge 3$ and let $a = \prod_{j\in J}m_j$ and $b = \prod_{k\in K}m_k$
  be two distinct vertices of $\pdg(S)$. Suppose that $m_j = m_k$ for
  some $j\in J$ and $k\in K$. Then $a-m_j-b$ is a path of length 2
  from $a$ to $b$ if $m_j\neq a,b$ and $(a,b)$ is an edge
  otherwise. Suppose that $m_j \not = m_k$ for every $j \in J$ and
  $k \in K$. We show that $|J| \leq n - 2$ or $|K| \leq n -
  2$. Suppose that $|J| = |K| = n - 1$. Since $n \geq 3$ and
  $m_j \not = m_k$ for every $j \in J$ and $k \in K$, we conclude that
  $|\{m_j | j \in J\} \cup \{m_k \mid k \in K\}| = 2n - 2 > n$, a
  contradiction. Thus $|J| \leq n - 2$ or $|K| \leq n - 2$. Without
  loss of generality, we may assume that $|J| \leq n - 2$. Take
  arbitrary $k \in K$. Then, $a-am_k-m_k-b$ is a path of length $3$
  from $a$ to $b$ if $b\neq m_k$ and otherwise, $a-am_k-b$ is a path
  of length 2. Hence $\pdg(S)$ is connected which completes the proof
  of~\eqref{pdg:connected}.

  \eqref{pdg:diameter}: Suppose that $n \geq 3$. Then $\pdg(S)$ is
  connected by (1). Let $a, b$ be two distinct vertices of
  $\pdg(S)$. In light of the proof given in (1), we have
  $d(a, b) \leq 3$. Let $a = \prod_{j = 1}^{(n-1)}m_j$ and $b =
  m_n$. Then $a-m_1-m_1b-b$ is a shortest path in $\pdg(S)$ from $a$
  to $b$. Hence $d(a, b) = 3$. Thus $\diam(\pdg(S)) = 3$.

  For \eqref{pdg:dominating} observe, that every perfect divisor $d$ of
  $m$, is either divisible by $m_1$ or divides $m_2m_3\cdots
  m_n$. Hence, every vertex of $\pdg(S)$ is connected by an edge  to either one
  of these two vertices.

  \eqref{pdg:partite}: Let $1\le k \le n-1$ and $J$,
  $K \subseteq \first{n}$ with $|J| = |K| = k$. Set
  $a = \prod_{j\in J}m_j$ and $b = \prod_{k\in K}m_k$ be two different
  vertices of $\pdg(S)$ which implies $J\neq K$. Therefore, there
  exist $j \in J\setminus K$ and $k\in K\setminus J$.  In the proof
  of Lemma~\ref{lem:pd-order} we have shown that it now follows that
  $m_j \nmid b$ and $m_k \nmid a$. In particular, it follows that
  $a\nmid b$ and $b\nmid a$. Hence no two vertices in
  $\{\prod_{j\in J}m_j \mid |J| = k \}$ are connected by an edge.

  For~\eqref{pdg:degree}, let $a = \prod_{j\in J}m_j$ be perfect
  divisor of $m$ and set $k = |J|$. The perfect divisors of $m$ with
  respect to $S$ which divide $a$ which are connected by an edge to
  $a$ correspond to the non-empty, proper subsets of $J$ which are are
  $\sum_{i=1}^{k-1}\binom{k}{i} = 2^{k}-2$ many.  In addition, we need
  to count the number of perfect divisors of $m$ which are divisible
  by $a$. These are exactly the ones of the form $\prod_{k\in K}m_k$
  with $J \subsetneq K \subsetneq \first{n}$ of which there are
  $\sum_{i=1}^{n-k-1} \binom{n-k}{i} = 2^{n-k}-2$. Hence
  $\deg(a) = 2^k + 2^{n-k} -4$.

  \eqref{pdg:girth}: For $n = 3$, we can verify in
  Figure~\ref{fig:pdg-n3}, that there is cycle of length 6 and no
  shorter cycle.
  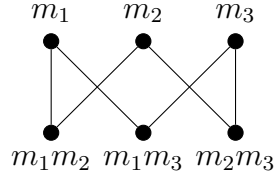
\begin{figure}[H]
  	\centering
  	\begin{tikzpicture}
  \tikzstyle{vertex}=[circle, fill=black, inner sep = 2,  draw,text=black]

  \node[style=vertex, label=above:$m_1$](m1) at (0,2) {};
  \node[style=vertex, label=above:$m_2$](m2)[right = 1cm of m1] {};
  \node[style=vertex, label=above:$m_3$](m3)[right = 1cm of m2] {};

  \node[style=vertex, label=below:$m_1m_2$](m12)[below = 1cm of  m1] {};
  \node[style=vertex, label=below:$m_1m_3$](m13)[right = 1cm of m12] {};
  \node[style=vertex, label=below:$m_2m_3$](m23)[right = 1cm of m13] {};

  \draw  (m1) -- (m12);
  \draw  (m1) -- (m13);
  \draw  (m2) -- (m12);
  \draw  (m2) -- (m23);
  \draw  (m3) -- (m13);
  \draw  (m3) -- (m23);
\end{tikzpicture}

  	\caption{Perfect divisor graph for $n=3$}
  	\label{fig:pdg-n3}
  \end{figure}

  If $n\ge 4$, then $m_1m_2m_3$ is a perfect divisor and the edges
  $(m_1, m_1m_2)$, $(m_1m_2, m_1m_2m_3)$ and $(m_1m_2m_3,m_1)$ form a
  cycle of length 3 which is the smallest possible length of a cycle
  in $\pdg(S)$.

  Finally, for~\eqref{pdg:planar}, it is easily verified that
  $\pdg(S)$ is planar if $n=3$, cf.~Figure~\ref{fig:pdg-n3}. Moreover,
  Figure~\ref{fig:pdg-n4} shows a planar arrangement of the edges of
  $\pdg(S)$ for $n=4$.

  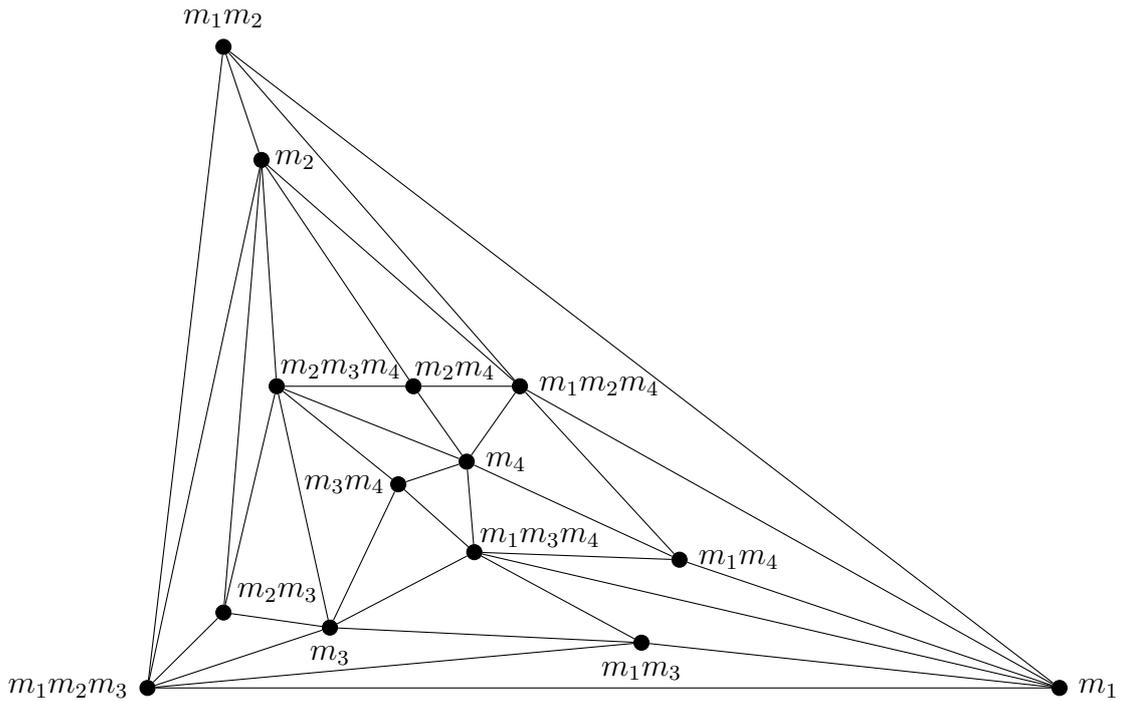
\begin{figure}[H]
  	\centering
  	\begin{tikzpicture}
  \tikzstyle{vertex}=[circle, fill=black, inner sep = 2,  draw,text=black]

  \node[style=vertex, label=above:$m_1m_2$] (12) at (1,8.5) {};       %

  \node[style=vertex, label={[label distance=-2pt] right:$m_2$}] (2) at (1.5,7) {};           %

  \node[style=vertex, label={[label distance=-6pt]25:$m_2m_3m_4$}] (234) at (1.7,4) {};   %
  \node[style=vertex, label={[label distance=-7pt]25:$m_2m_4$}] (24) at (3.5,4) {};       %
  \node[style=vertex, label=right:$m_1m_2m_4$] (124) at (4.9,4) {};   %

  \node[style=vertex, label={[label distance = -2pt] left:$m_3m_4$}] (34) at (3.3,2.7) {};       %
  \node[style=vertex, label=right:$m_4$] (4) at (4.2,3) {};           %

  \node[style=vertex, label={[label distance = -6pt]45:$m_1m_3m_4$}] (134) at (4.3,1.8) {}; %
  \node[style=vertex, label=right:$m_1m_4$] (14) at (7,1.7) {};       %

  \node[style=vertex, label= {[label distance=-2pt]3:$m_2m_3$}] (23) at (1,1) {};       %
  \node[style=vertex, label=below:$m_3$] (3) at (2.4,0.8) {};           %
  \node[style=vertex, label=below:$m_1m_3$] (13) at (6.5,0.6) {};       %

  \node[style=vertex, label=left:$m_1m_2m_3$] (123) at (0,0) {};    %
  \node[style=vertex, label=right:$m_1$] (1) at (12,0) {};           %

  \draw  (1) -- (12);
  \draw  (1) -- (13);
  \draw  (1) -- (14);
  \draw  (1) -- (123);
  \draw  (1) -- (124);
  \draw  (1) -- (134);

  \draw  (2) -- (12);
  \draw  (2) -- (23);
  \draw  (2) -- (24);
  \draw  (2) -- (123);
  \draw  (2) -- (124);
  \draw  (2) -- (234);

  \draw  (3) -- (13);
  \draw  (3) -- (23);
  \draw  (3) -- (34);
  \draw  (3) -- (123);
  \draw  (3) -- (134);
  \draw  (3) -- (234);

  \draw  (4) -- (14);
  \draw  (4) -- (24);
  \draw  (4) -- (34);
  \draw  (4) -- (124);
  \draw  (4) -- (134);
  \draw  (4) -- (234);

  \draw  (12) -- (123);
  \draw  (12) -- (124);

  \draw  (13) -- (123);
  \draw  (13) -- (134);

  \draw  (14) -- (124);
  \draw  (14) -- (134);

  \draw  (23) -- (123);
  \draw  (23) -- (234);

  \draw  (24) -- (124);
  \draw  (24) -- (234);

  \draw  (34) -- (134);
  \draw  (34) -- (234);

\end{tikzpicture} 

  	\caption{Perfect divisor graph for $n=4$ with planar
          arrangement of edges}
  	\label{fig:pdg-n4}
  \end{figure}

  If, however, $n\ge 5$, then Figure~\ref{fig:pdg-K33-minor} is a $K_{3, 3}$ subgraph of $\pdg(S)$, and hence $\pdg(S)$ is not a planar by Kuratowski’s Theorem on planar graphs.
  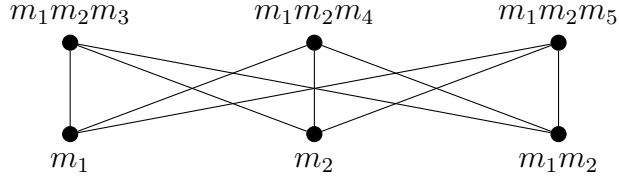
\begin{figure}[H]
    \centering
    \begin{tikzpicture}
  \tikzstyle{vertex}=[circle, fill=black, inner sep = 2,  draw,text=black]

  \node[style=vertex, label=above:{$m_1m_2m_3$}](m12) at (0,2) {};
  \node[style=vertex, label=above:{$m_1m_2m_4$}](m3)[right = 3cm of m12] {};
  \node[style=vertex, label=above:{$m_1m_2m_5$}](m4)[right = 3cm of m3] {};

  \node[style=vertex, label=below:{$m_1$}](m134)[below = 1cm of  m12] {};
  \node[style=vertex, label=below:{$m_2$}](m234)[right = 3cm of m134] {};
  \node[style=vertex, label=below:{$m_1m_2$}](m1234)[right = 3cm of m234] {};

  \draw  (m12) -- (m134);
  \draw  (m12) -- (m234);
  \draw  (m12) -- (m1234);
  \draw  (m3) -- (m134);
  \draw  (m3) -- (m234);
  \draw  (m3) -- (m1234);
  \draw  (m4) -- (m134);
  \draw  (m4) -- (m234);
  \draw  (m4) -- (m1234);
\end{tikzpicture}

    \caption{$\pdg(S)$ contains $K_{3,3}$ as a subgraph for $n\ge 5$.}
    \label{fig:pdg-K33-minor}
  \end{figure}

\end{proof}

Next, we compute the Ramsey number with respect to the class of
perfect divisor graphs. Note that $\pdgclass$ is a subclass of
$\poclass$ which immediately implies that $\pdgR(n,m) \le \poR(n,m)$
for all $n$, $m\ge 1$. We use Theorem~\ref{thm:properties-pdg} to show
that equality holds.

\begin{theorem}\label{thm:pdg-ramsey}
  Let $n$, $m\ge 1$. Then for the Ramsey number $\pdgR$ with respect
  to the class $\pdgclass$ of perfect divisor graphs the following
  holds
  \begin{equation*}
    \pdgR(n,m) = \poR(n,m) = (n-1)(m-1)+1.
  \end{equation*}
\end{theorem}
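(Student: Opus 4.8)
The plan is to prove the two inequalities separately, using the already-established fact that $\pdgclass \subseteq \poclass$ gives the easy direction $\pdgR(n,m) \le \poR(n,m) = (n-1)(m-1)+1$ for free, and then constructing a single perfect divisor graph that realizes the lower bound. To establish $\pdgR(n,m) > (n-1)(m-1)$, I would exhibit, for given $n,m\ge 1$, a perfect divisor graph $\pdg(S)$ together with an induced subgraph $H$ on exactly $(n-1)(m-1)$ vertices that contains neither a $K_n$ nor an independent set of size $m$. The natural candidate is to mimic the extremal poset from the proof of Theorem~\ref{thm:poramsey}: I want an induced subgraph that is a complete $(n-1)$-partite graph with each part of size $m-1$, since such a graph has clique number $n-1$ and maximum independent set $m-1$.

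First I would choose the ambient set $S$ large enough, say taking $R = \Z$ (or any ring with enough pairwise coprime non-units, e.g.\ distinct primes $m_1, m_2, \ldots$), so that $\pdg(S)$ has a rich supply of vertices. The key structural input is Theorem~\ref{thm:properties-pdg}\eqref{pdg:partite}: the level sets $P_k = \{\prod_{j\in J}m_j \mid |J| = k\}$ are independent sets, and moreover two perfect divisors from different levels are adjacent exactly when one divides the other. So to build a complete multipartite induced subgraph I would select $n-1$ of the level sets $P_{k_1}, \ldots, P_{k_{n-1}}$ with $k_1 < k_2 < \cdots < k_{n-1}$ and, within each, pick $m-1$ vertices, arranging the choices so that every chosen vertex in a lower level divides some (indeed the appropriate) chosen vertex in each higher level, forming a chain-like divisibility pattern across levels while keeping each level internally independent.

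The main obstacle is the combinatorial bookkeeping of choosing these divisors so that the induced graph is \emph{exactly} complete $(n-1)$-partite: I must guarantee that whenever $a$ is chosen at level $k_i$ and $b$ at level $k_{i'}$ with $i < i'$, we have $a \mid b$ (so they are adjacent), while no two vertices within the same chosen level are comparable (automatic from \eqref{pdg:partite}). A clean way to force cross-level divisibility is to take a single flag of subsets $J_1 \subsetneq J_2 \subsetneq \cdots$ and build the $m-1$ copies in each level by perturbing the top elements, but one must verify nested containment is preserved. Concretely, with $n$ primes available one can take nested index sets and append distinct "private" primes to generate the $m-1$ independent representatives at each level; I would need to check divisibility $a\mid b$ holds across levels and fails within a level, and that $n$ and the number of primes are large enough to accommodate all $(n-1)$ levels and $(m-1)$ representatives simultaneously. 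Since $S$ may be taken as large as needed, this is a matter of careful index arithmetic rather than a genuine difficulty.

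Once the induced subgraph $H$ with the prescribed complete $(n-1)$-partite structure is produced, its clique number is $n-1 < n$ and its largest independent set has size $m-1 < m$, so $H$ witnesses $\pdgR(n,m) > (n-1)(m-1)$. Combined with the upper bound from $\pdgclass\subseteq\poclass$ and Theorem~\ref{thm:poramsey}, this yields $\pdgR(n,m) = (n-1)(m-1)+1$, and symmetry in $n,m$ follows immediately from the symmetry of this expression. I would also handle the degenerate cases $n=1$ or $m=1$ separately, since then $(n-1)(m-1)+1 = 1$ and the statement is trivial.
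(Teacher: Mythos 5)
Your proposal is correct and matches the paper's proof essentially step for step: the paper also takes $R=\Z$ with $w=(n-1)(m-1)$ distinct primes, forms the nested products $a_i=p_1\cdots p_{k_i}$ with ``private'' primes $p_{k_i+1},\ldots,p_{k_i+(m-1)}$ appended to build the parts $A_i$, and invokes Theorem~\ref{thm:properties-pdg}\eqref{pdg:partite} for independence within each level. The only difference is that the paper writes out the explicit index arithmetic you defer to, so no further comment is needed.
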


\begin{proof}
  We set $w = (n-1)(m-1)$ and show that $\pdgR(n, m) > w = (n-1)(m-1)$
  by giving an example of perfect divisor graph $G$ and an induced
  subgraph $H$ of $G$ with $w$ vertices which is a complete
  $(n-1)$-partite graph graph on $w$ vertices in which independent
  sets are of cardinality at most $m-1$.

  Let $R = \Z$ and let $S = \{p_1, p_2, \ldots, p_w\}$ be a set of $w$
  distinct positive prime numbers of $\Z$. We set
  $m = p_1p_2\cdots p_w$ and $G = \pdg(S)$.

  For each $1 \leq i \leq n - 1$, let $k_i =(i-1)(m-1)$ and we set
  $a_i = p_1p_2 \cdots p_{k_i}$ (where $a_1 = 1$) and
  \begin{equation*}
    A_i = \{a_i\,p_{k_i + 1}, \ldots, a_i\,p_{k_i + (m - 1)}\} .
  \end{equation*}
  Note that $A_{1} = \{p_1, \ldots, p_{m-1}\}$.

  Let $H$ be the subgraph of $G$ induced by the vertex set
  $A_1\cup A_2 \cup \cdots \cup A_{n-1}$. By construction, for each
  $1 \leq i \leq n - 1$, $|A_i| = m - 1$ holds and $A_i$ is contained
  in the partition $P_{k_i+1}$ of $G$,
  cf.~Theorem~\ref{thm:properties-pdg}.\ref{pdg:partite}. This implies
  that each $A_i$ is an independent vertex set of $H$ of cardinality
  $m-1$.

  Moreover, since $G$ is a $(w-1)$-partite graph and each $A_i$ is contained in  $P_{k_i+1}$, it follows that $H$
  is an $(n-1)$-partite graph (with partitioning
  $A_1 \cup A_2 \cup \cdots \cup A_{n-1}$). For an example of this
  construction with $m=5$ and $n=4$ see~Example~\ref{exa:div-graph}.

  Thus, no more than $m-1$ vertices of $H$ are independent and a
  straight-forward verification shows that the clique number of $H$ is
  at most $n-1$. Thus $\pdgR(n, m) > w$. Hence by
  Theorem~\ref{thm:poramsey}, we have
  $\pdgR(n, m) = \poR(n, m) = w + 1 = (n -1)(m - 1) + 1$.
\end{proof}

\begin{example}\label{exa:div-graph}
  We demonstrate the construction of the previous proof for the
  example $R = \Z$ with $n=4$ and $m=5$. That is, we construct a
  perfect divisor graph which has a complete $3$-partite graph $H$ as
  subgraph and each of the partitions of $H$ consist of $4$
  independent vertices.

  Let $w = (n-1)(m-1) = 12$ and we set
  $S = \{p_1, p_2, \ldots, p_{12}\}$. Next, let $n_i = (i-1)(m-1)$ for
  $1\le i \le 3$, that is, $n_1 = 0$, $n_2 = 4$ and $n_3 = 8$.  Then
  $a_1 = 1$, $a_2 = p_1p_2p_3 p_4$ and $a_3 = p_1p_2\cdots p_{8}$.

   We set
   \begin{align*}
     A_1 &= \{a_1p_1,a_1 p_2, a_1p_3, a_1p_4\} = \{p_1, p_2, p_3, p_4\} \\
     A_2 & = \{a_2p_5, a_2p_6, a_2p_7, a_2p_{8}\}    \\
         &= \{(p_1\cdots p_4)p_5, (p_1\cdots p_4)p_6, (p_1\cdots p_4)p_{7}, (p_1\cdots p_4)p_{8}\}    \\
     A_3 & = \{a_3p_{9}, a_3p_{10}, a_3p_{11}, a_3p_{12}\}    \\
         &= \{(p_1p_2\cdots p_{8})p_{9}, (p_1p_2\cdots p_{8})p_{10}, \ldots, (p_1p_2\cdots p_{8})p_{12}\}    \\
   \end{align*}
   The subgraph of $\pdg(S)$ induced by $A_1\cup A_2\cup A_3$ is a complete
   $3$-partite graph in which each partition has $4$ vertices that are
   independent, see~Figure~\ref{fig:div-graph}.

   \begin{figure}[h]
     \centering
     \begin{tikzpicture}
  \tikzstyle{vertex}=[circle, fill=black, inner sep = 2,  draw,text=black]

  \node[style=vertex, label=below:$p_1$](p1) at (1,0.5) {};
  \node[style=vertex, label=below:$p_2$](p2)[right = 1.3cm of p1.center] {};
  \node[style=vertex, label=below:$p_3$](p3)[right = 1.3cm of p2.center] {};
  \node[style=vertex, label=below:$p_4$](p4)[right = 1.3cm of p3.center] {};

  \node[style=vertex, label=left:$a_2p_5$](a1p5) [above = 1.5cm of p1.center]{};
  \node[style=vertex, label=left:$a_2p_6$](a1p6)[above = 1.5cm of p2.center] {};
  \node[style=vertex, label=right:$a_2p_7$](a1p7)[above = 1.5cm of p3.center] {};
  \node[style=vertex, label=right:$a_2p_8$](a1p8)[above = 1.5cm of p4.center] {};

  \node[style=vertex, label=above:$a_3p_9$](a2p9) [above = 3cm of p1.center]{};
  \node[style=vertex, label=above:$a_3p_{10}$](a2p10)[above = 3cm of p2.center] {};
  \node[style=vertex, label=above:$a_3p_{11}$](a2p11)[above = 3cm of p3.center] {};
  \node[style=vertex, label=above:$a_3p_{12}$](a2p12)[above = 3cm of p4.center] {};


  \draw  (p1) -- (a1p5);
  \draw  (p1) -- (a1p6);
  \draw  (p1) -- (a1p7);
  \draw  (p1) -- (a1p8);

  \draw  (p2) -- (a1p5);
  \draw  (p2) -- (a1p6);
  \draw  (p2) -- (a1p7);
  \draw  (p2) -- (a1p8);

  \draw  (p3) -- (a1p5);
  \draw  (p3) -- (a1p6);
  \draw  (p3) -- (a1p7);
  \draw  (p3) -- (a1p8);

  \draw  (p4) -- (a1p5);
  \draw  (p4) -- (a1p6);
  \draw  (p4) -- (a1p7);
  \draw  (p4) -- (a1p8);

  \draw  (a2p9) -- (a1p5);
  \draw  (a2p9) -- (a1p6);
  \draw  (a2p9) -- (a1p7);
  \draw  (a2p9) -- (a1p8);

  \draw  (a2p10) -- (a1p5);
  \draw  (a2p10) -- (a1p6);
  \draw  (a2p10) -- (a1p7);
  \draw  (a2p10) -- (a1p8);

  \draw  (a2p11) -- (a1p5);
  \draw  (a2p11) -- (a1p6);
  \draw  (a2p11) -- (a1p7);
  \draw  (a2p11) -- (a1p8);

  \draw  (a2p12) -- (a1p5);
  \draw  (a2p12) -- (a1p6);
  \draw  (a2p12) -- (a1p7);
  \draw  (a2p12) -- (a1p8);
  
\end{tikzpicture}

     \caption{Induced subgraph $H$ of $\pdg(\{p_1,\ldots, p_{12}\})$
       where, for better visibility, the edges between $A_1$ and $A_3$
       are ``hidden'' behind the edges between $A_1$ and $A_2$ and the
       edges between $A_2$ and $A_3$.}
     \label{fig:div-graph}
   \end{figure}
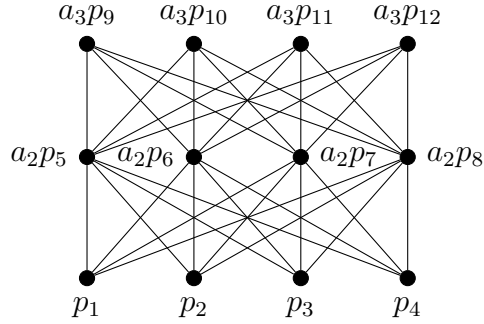

\end{example}

\subsection{The divisibility graph of a commutative ring}\label{subsec:divgraphs}

\begin{definition}\label{def:divgraph}
  Let $R$ be a commutative ring and $a$, $b$ be distinct elements of
  $R$.
  \begin{enumerate}
  \item If $a$ is a non-zero non-unit element of $R$, then we say $a$ is
    a \emph{proper} element of $R$.
  \label{def-item:proper}
  \item If $a \mid b$ (in $R$) and $b\nmid a$ (in $R$), then we write
    $a \mid\mid b$ .
  \item The \emph{divisibility graph} $\Div(R)$ of $R$, is the
    undirected simple graph whose vertex set consists of the proper
    elements of $R$ such that two vertices $a \neq b$ are adjacent if
    and only if $a \mid\mid b$ or $b \mid\mid a$.
  \end{enumerate}
\end{definition}

The following lemma can be verified by a straight-forward argument.
\begin{lemma}\label{lemma:divpo}
  Let $R$ be a commutative ring and let $V$ be the set of all proper
  elements of $R$ and define $\leq$ on $V$ such that for all
  $a$, $b \in V$, we have $a\leq b$ if and only if $a = b$ or
  $a \mid\mid b$.

  Then $(V, \leq)$ is a partially ordered set and the divisibility
  graph $\Div(R)$ of $R$ is a partial order graph.
\end{lemma}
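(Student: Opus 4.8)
The plan is to verify that the relation $\leq$ defined by $a \leq b \iff a = b$ or $a \mid\mid b$ is a partial order on the set $V$ of proper elements, and then to observe that $\Div(R)$ is exactly the partial order graph $G_V$ associated to $(V, \leq)$. Reflexivity is built into the definition, so the work is in transitivity and antisymmetry. For \emph{antisymmetry}, suppose $a \leq b$ and $b \leq a$ with $a \neq b$. Then by definition $a \mid\mid b$ and $b \mid\mid a$, but $a \mid\mid b$ means in particular $b \nmid a$, while $b \mid\mid a$ asserts $b \mid a$; this is an immediate contradiction, so $a = b$. Thus antisymmetry follows directly from the asymmetric ``strict'' nature of $\mid\mid$ and requires no ring-theoretic input.

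For \emph{transitivity}, suppose $a \leq b$ and $b \leq c$. If any two of $a, b, c$ coincide the claim reduces to the reflexive case, so assume they are pairwise distinct, giving $a \mid\mid b$ and $b \mid\mid c$. From $a \mid b$ and $b \mid c$ we get $a \mid c$ by transitivity of divisibility. It remains to rule out $c \mid a$: if $c \mid a$ held, then combining with $a \mid b$ would give $c \mid b$, contradicting $b \mid\mid c$ (which includes $c \nmid b$). Hence $c \nmid a$, so $a \mid\mid c$ and therefore $a \leq c$. This establishes that $(V, \leq)$ is a partially ordered set.

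Finally, I would check that the partial order graph $G_V$ coincides with $\Div(R)$. Two distinct vertices $a, b \in V$ are adjacent in $G_V$ precisely when $a \leq b$ or $b \leq a$, which by the definition of $\leq$ on distinct elements means $a \mid\mid b$ or $b \mid\mid a$ --- exactly the adjacency condition for $\Div(R)$ in Definition~\ref{def:divgraph}. Since the vertex sets agree (both are the set of proper elements of $R$) and the edge relations agree, we have $\Div(R) = G_V$, so $\Div(R)$ is a partial order graph.

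I do not anticipate any genuine obstacle here: the only point deserving care is confirming that the strict relation $\mid\mid$ yields a genuinely antisymmetric order (rather than merely a preorder coming from the reflexive relation $\mid$), which is precisely why the definition uses $\mid\mid$ and excludes the case $a \mid b$ with $b \mid a$. The ring structure enters only through transitivity of ordinary divisibility, so the argument is short and formal --- consistent with the paper's remark that the lemma follows by a straightforward argument.
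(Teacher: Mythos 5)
Your proof is correct: reflexivity is built in, antisymmetry follows because $a \mid\mid b$ and $b \mid\mid a$ are directly contradictory, transitivity follows from transitivity of divisibility plus the observation that $c \mid a$ would force $c \mid b$, and the edge relations of $G_V$ and $\Div(R)$ visibly coincide. The paper omits the proof entirely, stating only that the lemma ``can be verified by a straight-forward argument,'' and your verification is exactly that intended argument.
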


By Lemma~\ref{lemma:divpo}, it is clear that
$\divR(n, m) \leq \poR(n, m)$ holds. However, since a perfect divisor
graph is an induced subgroup of a divisibility graph, it
follows from Theorem~\ref{thm:pdg-ramsey} that equality holds. We
conclude the following theorem.

\begin{theorem}\label{thm:divisibility}
  Let $n$, $m \geq 1$ be positive integers ($n$, $m$ need not be
  distinct). Then for the Ramsey number $\divR$ with respect to the
  class $\divclass$ of divisibility graphs the following holds

  \begin{equation*}
    \divR(n, m) = \poR(n, m) = (n - 1)(m - 1) + 1.
  \end{equation*}
\end{theorem}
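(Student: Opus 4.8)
The plan is to establish the two bounds $\divR(n,m) \le (n-1)(m-1)+1$ and $\divR(n,m) > (n-1)(m-1)$ separately and then combine them. For the upper bound I would invoke Lemma~\ref{lemma:divpo}, which says every divisibility graph is a partial order graph, so $\divclass \subseteq \poclass$. Since passing to a subclass can only decrease the Ramsey number --- a smaller class imposes the clique-or-independent-set requirement on fewer graphs --- this gives $\divR(n,m) \le \poR(n,m)$, and Theorem~\ref{thm:poramsey} evaluates the right-hand side as $(n-1)(m-1)+1$.

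For the lower bound I would not build a fresh extremal example but instead recycle the one from the proof of Theorem~\ref{thm:pdg-ramsey}. The crucial step is to argue that the perfect divisor graph $\pdg(S)$ constructed there over $R = \Z$ (with $S$ a set of distinct primes) sits inside the divisibility graph $\Div(\Z)$ as an \emph{induced} subgraph. Two things need checking: that every perfect divisor of $m$ is a proper element of $\Z$ (clear, since it is a product of distinct primes, hence $>1$ and non-zero), and that the adjacency relations match. In $\pdg(S)$ distinct vertices $a,b$ are joined when $a\mid b$ or $b\mid a$, while in $\Div(\Z)$ they are joined when $a\mid\mid b$ or $b\mid\mid a$. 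Here I would appeal to the antisymmetry part of Lemma~\ref{lem:pd-order}: for distinct perfect divisors $a\mid b$ already precludes $b\mid a$, so $a\mid b$ is equivalent to $a\mid\mid b$ and the two adjacency relations coincide.

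Granting this, the complete $(n-1)$-partite graph $H$ on $(n-1)(m-1)$ vertices from Theorem~\ref{thm:pdg-ramsey} --- with clique number at most $n-1$ and all independent sets of size at most $m-1$ --- is an induced subgraph of $\pdg(S)$ and therefore, by transitivity of the induced-subgraph relation, of $\Div(\Z)$. This produces a divisibility graph with an induced subgraph on $(n-1)(m-1)$ vertices containing neither $K_n$ nor an independent set of $m$ vertices, forcing $\divR(n,m) > (n-1)(m-1)$. Combining with the upper bound closes the argument.

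I expect the induced-subgraph identification to be the only real obstacle: it is not enough that $\pdg(S)$ be \emph{some} subgraph of $\Div(\Z)$; I must rule out any spurious extra edges among the perfect divisors once they are regarded as vertices of $\Div(\Z)$. That is precisely where the antisymmetry of divisibility on perfect divisors (Lemma~\ref{lem:pd-order}) does the work, converting the bare divisibility relation of $\pdg(S)$ into the strict relation $\mid\mid$ used in $\Div(\Z)$.
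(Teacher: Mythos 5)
Your proposal is correct and follows essentially the same route as the paper: the upper bound via Lemma~\ref{lemma:divpo} (divisibility graphs are partial order graphs) together with Theorem~\ref{thm:poramsey}, and the lower bound by observing that the extremal perfect divisor graph from Theorem~\ref{thm:pdg-ramsey} is an induced subgraph of $\Div(\Z)$. The only difference is that you spell out why the adjacency relations of $\pdg(S)$ and $\Div(\Z)$ agree on the perfect divisors (via the antisymmetry in Lemma~\ref{lem:pd-order}), a point the paper asserts without proof.
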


Moreover, in view of Theorem~\ref{thm:divisibility}, we have the
following result.
\begin{corollary}\label{cor:elements}
  Let $n$, $m \geq 1$ be positive integers ($n$, $m$ need not be
  distinct), $k = (n-1)(m-1) + 1$, $R$ be a commutative ring and $S$
  be a subset of proper elements of $R$ such that $|S| \geq k$.

  Then one of the following assertions holds:
  \begin{enumerate}
  \item There are $n$ elements $a_1$, \ldots, $a_n\in S$ such that
    $a_1\mid\mid a_2 \mid\mid \cdots \mid\mid a_n$ (in $R$).
  \item There are $m$ pairwise distinct elements $b_1$, \ldots,
    $b_m\in S$ such that for all $1 \leq h \neq f \leq m$ either
    \begin{itemize}
    \item $b_h \nmid b_f$ or
    \item $b_h \mid b_f$ and $b_f \mid b_h$
    \end{itemize}
    holds.
  \end{enumerate}
\end{corollary}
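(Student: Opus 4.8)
The plan is to deduce the corollary directly from Theorem~\ref{thm:divisibility} by reading off the two possible outcomes of the Ramsey property inside the divisibility graph $\Div(R)$. Recall from Lemma~\ref{lemma:divpo} that $\Div(R)$ is the partial order graph of the poset $(V, \le)$ on the set $V$ of proper elements, where $a \le b$ if and only if $a = b$ or $a \mid\mid b$; in particular the strict relation $a < b$ coincides with $a \mid\mid b$. Since $S \subseteq V$ and, by Theorem~\ref{thm:divisibility}, $k = (n-1)(m-1)+1 = \divR(n,m)$, I would fix any subset $S' \subseteq S$ with $|S'| = k$ (possible because $|S| \ge k$) and let $H$ be the subgraph of $\Div(R)$ induced by $S'$. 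By the definition of the Ramsey number and the value $\divR(n,m) = k$, this $H$ on exactly $k$ vertices must contain either a clique $K_n$ or an independent set of $m$ vertices; these two cases will produce assertions (1) and (2) respectively.

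For the clique case, suppose $H$ contains an $n$-clique on vertices $a_1, \ldots, a_n \in S'$. Pairwise adjacency in the comparability graph $\Div(R)$ means any two of them are comparable under $\le$, so $\{a_1, \ldots, a_n\}$ is a chain in $(V, \le)$. Because $\le$ is a genuine partial order (antisymmetry being part of Lemma~\ref{lemma:divpo}), a chain of $n$ distinct elements is totally ordered, and after relabelling I may assume $a_1 < a_2 < \cdots < a_n$. Via the identification $a < b \iff a \mid\mid b$, this reads $a_1 \mid\mid a_2 \mid\mid \cdots \mid\mid a_n$, which is exactly assertion (1).

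For the independent set case, suppose $H$ contains an independent set on distinct vertices $b_1, \ldots, b_m \in S'$. Non-adjacency of $b_h$ and $b_f$ in $\Div(R)$ says that neither $b_h \mid\mid b_f$ nor $b_f \mid\mid b_h$ holds. The bookkeeping step is to unwind this into the disjunction of assertion (2): for a single ordered pair, $\neg(b_h \mid\mid b_f)$ is equivalent to ``$b_h \nmid b_f$, or $b_h \mid b_f$ and $b_f \mid b_h$'', as one checks by splitting on whether $b_h \mid b_f$; the symmetric condition $\neg(b_f \mid\mid b_h)$ is the corresponding statement for the ordered pair $(f,h)$. Ranging over all ordered pairs $1 \le h \ne f \le m$ therefore encodes non-adjacency in both directions simultaneously, giving precisely assertion (2).

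The whole argument is a dictionary between graph-theoretic and divisibility language, so there is no genuine analytic obstacle; the only point demanding care is the final equivalence, where the quantifier over \emph{ordered} pairs (rather than unordered ones) is exactly what makes the stated disjunctive condition coincide with independence. I would be careful to keep assertion (2) phrased over ordered pairs, so as to exclude the spurious configuration in which, say, $b_h \mid b_f$ but $b_f \nmid b_h$ — which is adjacency, hence forbidden in an independent set — from slipping through.
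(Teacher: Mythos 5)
Your proposal is correct and is exactly the deduction the paper intends: the corollary is stated as an immediate consequence of Theorem~\ref{thm:divisibility} (the paper gives no written proof), and your argument simply makes explicit the translation between cliques/independent sets in $\Div(R)$ and the divisibility statements, including the careful unwinding of $\neg(b_h \mid\mid b_f)$ over ordered pairs. No gaps.
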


\subsection{Inclusion ideal graphs of rings}\label{subsec:idealgraphs}

\begin{definition}\label{def:inclusion-graph}
  Let $R$ be a ring.
  \begin{enumerate}
  \item We call a left (right) ideal $I$ of $R$ \emph{non-trivial} if
    $I \not = \{0\}$ and $I \not = R$.
  \item The \emph{inclusion ideal graph} $\In(R)$ of $R$ is the
    (simple, undirected) graph whose vertex set is the set of
    non-trivial left ideals of $R$ and two distinct left ideals $I$,
    $J$ are adjacent if and only if $I \subset J$ or $J \subset I$
    (cf.~Akbari et.~al~\cite{Akbari-etal:2015:IIG}).
  \item By $\IIclass$, we denote the \emph{class of all inclusion
      ideal graphs}.
  \end{enumerate}
\end{definition}

\begin{remark}\label{rem:ideals-po}
  The set $V$ of all non-trivial left ideals of a ring $R$ together
  with the partial order $\subseteq$ induced by inclusion is a
  partially ordered set. Hence the inclusion graph $\In(R)$ of a ring
  $R$ is a partial order graph.
\end{remark}

By Remark~\ref{rem:ideals-po}, it is clear that
$\inR(n, m) \leq \poR(n, m)$. The reverse inequality can be seen from
the following argument. Let $G$ be the graph constructed in the
proof of Theorem~\ref{thm:pdg-ramsey}, that is, $G = \pdg(S)$ with
$S=\{p_1, \ldots, p_w\}$ is the set of $w = (n-1)(m-1)$ distinct
positive primes of $\Z$. Recall that $G$ contains a subgraph with
$(n-1)(m-1)$ vertices whose clique number is at most $n-1$ and in
which no more than $m-1$ vertices are independent. The graph $G$ is
graph-isomorphic to a subgraph of the inclusion ideal graph of $\Z$,
namely the subgraph induced by the principal ideals generated by the
elements in the vertex set of $G$. Since the inclusion ideal graph of
$\Z$ is contained in $\IIclass$, it follows that
$(n-1)(m-1) < \inR(n, m)$. Hence by Theorems~\ref{thm:poramsey} we
conclude the following theorem.

\begin{theorem}\label{thm:inclusion}
  Let $n$, $m \geq 1$ be positive integers ($n$, $m$ need not be
  distinct). Then for the Ramsey number $\inR$ with respect to the class $\IIclass$
  of inclusion ideal graphs the following holds
   \begin{equation*}
     \inR(n, m) = \poR(n, m) = (n - 1)(m - 1) + 1.
   \end{equation*}
\end{theorem}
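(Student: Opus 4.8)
The plan is to establish Theorem~\ref{thm:inclusion} by sandwiching $\inR(n,m)$ between two bounds that both equal $(n-1)(m-1)+1$. The upper bound $\inR(n,m) \le \poR(n,m)$ is immediate from Remark~\ref{rem:ideals-po}, which tells us that every inclusion ideal graph is itself a partial order graph; hence the class $\IIclass$ is a subclass of $\poclass$, and the Ramsey number for a subclass can only be smaller or equal. Combined with Theorem~\ref{thm:poramsey}, this gives $\inR(n,m) \le (n-1)(m-1)+1$.

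For the matching lower bound, the strategy is to exhibit a concrete inclusion ideal graph (or an induced subgraph thereof) on $(n-1)(m-1)$ vertices whose clique number is at most $n-1$ and in which no independent set exceeds $m-1$ vertices; this witnesses $\inR(n,m) > (n-1)(m-1)$. First I would reuse the perfect divisor graph $G = \pdg(S)$ from the proof of Theorem~\ref{thm:pdg-ramsey}, taking $R = \Z$ and $S = \{p_1, \ldots, p_w\}$ the set of $w = (n-1)(m-1)$ distinct positive primes. That proof already produced an induced subgraph $H$ with the desired clique and independence properties. The key observation is that divisibility of positive integers mirrors reverse inclusion of the principal ideals they generate: for $a, b \in \Z$, $a \mid b$ if and only if $(b) \subseteq (a)$. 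Thus the map $a \mapsto (a)$ sending each vertex of $G$ to the principal ideal it generates is a graph isomorphism from $G$ onto the subgraph of $\In(\Z)$ induced by those principal ideals, since adjacency on both sides records exactly the comparability relation.

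The main step, then, is to verify that this correspondence is an isomorphism of graphs, preserving both edges and non-edges. I would note that the principal ideals $(d)$ for the distinct perfect divisors $d$ are themselves pairwise distinct (distinct products of distinct primes generate distinct ideals in $\Z$), so the vertex map is a bijection; and that $(a,b)$ is an edge of $G$ precisely when $a \mid b$ or $b \mid a$, which translates to $(b) \subseteq (a)$ or $(a) \subseteq (b)$, i.e.\ precisely the adjacency condition in $\In(\Z)$. Since $\In(\Z)$ lies in $\IIclass$ and the isomorphic copy of $H$ is an induced subgraph of it, the clique and independence bounds transfer verbatim, yielding $(n-1)(m-1) < \inR(n,m)$.

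I do not anticipate a serious obstacle here: the hardest conceptual point is simply recognizing that divisibility and reverse inclusion of principal ideals carry the same comparability structure, after which the result follows by combining the two inequalities. Concluding, the two bounds together force $\inR(n,m) = \poR(n,m) = (n-1)(m-1)+1$, and the symmetry in $n$ and $m$ is inherited directly from Theorem~\ref{thm:poramsey}.
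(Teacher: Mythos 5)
Your proposal is correct and matches the paper's argument essentially verbatim: the upper bound comes from $\IIclass\subseteq\poclass$ via Remark~\ref{rem:ideals-po}, and the lower bound comes from transporting the complete $(n-1)$-partite witness inside $\pdg(\{p_1,\ldots,p_w\})$ to the subgraph of $\In(\Z)$ induced by the corresponding principal ideals. Your explicit verification that $a\mapsto(a)$ is a graph isomorphism is a slightly more detailed writeup of the same step the paper treats as immediate.
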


In view of Theorem \ref{thm:inclusion}, we have the following result.
\begin{corollary}\label{ideals}
  Let $R$ be a ring, $n$, $m \geq 1$ be positive
  integers ($n$, $m$ need not be distinct) and
  $S \subseteq \{I \mid I \text{ is a non-trivial left ideal of }R\}$ such that
  $|S| \geq (n-1)(m-1) + 1$.

  Then one the following assertions hold:
  \begin{enumerate}
  \item There are $n$ pairwise distinct elements (non-trivial left ideals)
    $I_1$, \ldots, $I_n\in  S$ with
    $I_1 \subset I_2 \subset \cdots \subset I_n$.
  \item There are $m$ elements (non-trivial left ideals)
    $J_1$, \ldots , $J_m\in S$ such that $J_a \nsubseteq J_b$ for
    every $1 \leq a \neq b \leq m$.
  \end{enumerate}
\end{corollary}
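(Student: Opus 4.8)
The plan is to read Corollary~\ref{ideals} as the order-theoretic translation of Theorem~\ref{thm:inclusion} applied to the inclusion ideal graph $\In(R)$. First I would recall from Remark~\ref{rem:ideals-po} that the set $V$ of all non-trivial left ideals of $R$, partially ordered by inclusion $\subseteq$, is a poset and that $\In(R)$ is its partial order graph; in particular $\In(R)\in\IIclass$. The hypothesis provides a subset $S\subseteq V$ with $|S|\ge (n-1)(m-1)+1$. Let $H$ denote the subgraph of $\In(R)$ induced by $S$.

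Next I would invoke Theorem~\ref{thm:inclusion}, which asserts $\inR(n,m)=(n-1)(m-1)+1$. By Definition~\ref{def:pographs}, once $|S|\ge \inR(n,m)$, the induced subgraph $H$ must contain either a complete $n$-clique $K_n$ or an independent set of $m$ vertices. (If $|S|$ strictly exceeds the Ramsey number, one selects any subset of exactly $\inR(n,m)$ vertices of $S$; the resulting $K_n$ or independent set is then also an induced $K_n$ or independent set inside $H$, and hence inside $S$.)

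It remains to translate these two graph-theoretic alternatives back into statements about ideals. In $\In(R)$ two distinct ideals are adjacent precisely when they are comparable under $\subseteq$. Thus an $n$-clique in $H$ is a set of $n$ pairwise comparable, pairwise distinct non-trivial left ideals; since any finite set of pairwise comparable elements of a poset forms a chain, these can be relabelled $I_1,\ldots,I_n$ so that $I_1\subset I_2\subset\cdots\subset I_n$, where the inclusions are strict because the ideals are distinct. This is assertion~(1). Dually, an independent set of $m$ vertices in $H$ is a set of $m$ pairwise non-adjacent ideals, that is, pairwise incomparable ideals $J_1,\ldots,J_m$; pairwise incomparability means exactly that $J_a\nsubseteq J_b$ for all $1\le a\ne b\le m$, which is assertion~(2).

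I expect no genuine obstacle here: the entire content is carried by Theorem~\ref{thm:inclusion}, and the only points requiring care are the two dictionary entries, namely that a clique in a comparability graph is a chain (so it may be linearly ordered with strict inclusions) and that an independent set is an antichain (so every ordered pair of its members is incomparable, matching the stated $\nsubseteq$ condition for each $1\le a\ne b\le m$).
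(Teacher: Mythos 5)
Your proposal is correct and matches the paper's intent exactly: the paper gives no explicit proof, stating only that the corollary follows ``in view of Theorem~\ref{thm:inclusion}'', and your argument---apply the Ramsey bound for $\IIclass$ to the subgraph of $\In(R)$ induced by $S$, then translate a clique into a chain and an independent set into an antichain---is precisely the intended derivation. The only detail you add beyond the paper is the (correct) observation that a finite set of pairwise comparable elements can be linearly ordered into a strict chain, which is a worthwhile point to make explicit.
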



\subsection{Matrix graphs over commutative rings}\label{subsec:matrixgraphs}

\begin{definition}
  Let $R$ be a commutative ring which is not a field and $j \ge 2$ an
  integer.
  \begin{enumerate}
  \item We denote by $R^{j\times j}$ the ring of all $j\times j$
    matrices with entries in $R$.
  \item Let
    $V = \{ A \in R^{j\times j} \mid \det(A) \text{ a proper element
      of } R\}$ be the set of all $j\times j$ matrices whose
    determinant is a proper element of $R$,
    cf.~Definition~\ref{def:divgraph}.\ref{def-item:proper}.  We
    define the \emph{matrix graph} $\Mat(R)$ of $R$ to be the
    undirected simple graph with $V$ as its vertex set and two
    distinct vertices $A$, $B \in V$ are adjacent if and only if
    $\det(A) \mid\mid \det(B)$ or $\det(A) \mid\mid \det(B)$.

  \item By $\matclass$ we denote the \emph{class of all matrix
      graphs}.
  \end{enumerate}

\end{definition}

\begin{lemma}
  Let $R$ be a commutative ring which is not a field, $j \ge 2$ an
  integer and
  \begin{equation*}
    V = \{ A \in R^{j\times j} \mid \det(A) \text{ is a proper element of } R\}.
  \end{equation*}
  Define $\leq$ on $V$ such that for all $A$, $B\in V$, we have
  $A\leq B$ if and only if $A = B$ or $\det(A) \mid\mid \det(B)$.

  Then $(V, \leq)$ is a partially ordered set and the graph $\Mat(R)$
  is a partial order graph.
\end{lemma}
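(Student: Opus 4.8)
The plan is to verify the two assertions separately, mirroring the argument behind the divisibility graph in Lemma~\ref{lemma:divpo}. The key observation, which I would record at the outset, is that the strict divisibility relation $\mid\mid$ on the proper elements of $R$ is itself a strict partial order. It is transitive: if $a\mid\mid b$ and $b\mid\mid c$, then $a\mid c$ follows from transitivity of $\mid$, while $c\mid a$ would force $c\mid b$, contradicting $c\nmid b$. It is also asymmetric: one cannot have both $a\mid\mid b$ and $b\mid\mid a$, since $a\mid\mid b$ gives $a\mid b$ whereas $b\mid\mid a$ gives $a\nmid b$, an immediate contradiction.

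First I would check that $\le$ is a partial order on $V$. Reflexivity holds by definition, since $A=A$. For antisymmetry, suppose $A\le B$ and $B\le A$ with $A\neq B$; then $\det(A)\mid\mid\det(B)$ and $\det(B)\mid\mid\det(A)$, contradicting the asymmetry of $\mid\mid$, so $A=B$. For transitivity, given $A\le B$ and $B\le C$, I would dispatch the cases $A=B$ or $B=C$ by substitution, and in the remaining case combine $\det(A)\mid\mid\det(B)$ and $\det(B)\mid\mid\det(C)$ with transitivity of $\mid\mid$ to obtain $\det(A)\mid\mid\det(C)$, hence $A\le C$.

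Second, I would identify $\Mat(R)$ with the partial order graph $G_V$ of $(V,\le)$. Both graphs have vertex set $V$, so it suffices to compare adjacency. For distinct $A,B\in V$, the definition of $G_V$ (Definition~\ref{def:pographs}) makes $A$ and $B$ adjacent exactly when $A\le B$ or $B\le A$; since $A\neq B$, this unwinds to $\det(A)\mid\mid\det(B)$ or $\det(B)\mid\mid\det(A)$, which is precisely the adjacency condition defining $\Mat(R)$. Therefore $\Mat(R)=G_V$ is a partial order graph.

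The argument involves no genuine obstacle and is essentially routine. The one point that warrants care is antisymmetry: because the map $A\mapsto\det(A)$ is far from injective, with many matrices sharing a determinant, the relation $\le$ is \emph{not} simply the pullback of a partial order along $\det$. It is the use of strict divisibility $\mid\mid$ rather than plain $\mid$, together with the explicit diagonal clause $A=B$, that rescues antisymmetry, and I would make sure the write-up highlights this interplay.
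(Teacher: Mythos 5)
Your proof is correct. The paper states this lemma without proof (as with the analogous Lemma~\ref{lemma:divpo}, it is left as a straightforward verification), and your argument --- establishing that $\mid\mid$ is a strict partial order on proper elements and then checking reflexivity, antisymmetry and transitivity of $\leq$, plus the adjacency match with $\Mat(R)$ --- is precisely the routine verification intended; your remark that the strictness of $\mid\mid$ together with the diagonal clause is what saves antisymmetry (since $\det$ is not injective) is a worthwhile point to make explicit.
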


By Theorem \ref{thm:poramsey}, it is clear that
$\matR(n, m) \leq \poR(n, m)$. We prove next that equality holds.

\begin{theorem}\label{matrices}
  Let $n$, $m \geq 1$ be positive integers ($n$, $m$ need not be
  distinct). Then for the Ramsey number $\matR$ with respect to the
  class $\matclass$ of matrix graphs the following holds
  \begin{equation*}
    \matR(n, m) = \poR(n, m) = (n - 1)(m - 1) + 1 .
  \end{equation*}
\end{theorem}

\begin{proof}
  Let $R = \Z$ and $j \geq 2$ and set $w = (n-1)(m-1) \geq 1$.
  Further, let $p_1$, $p_2$, \ldots, $p_w$ be distinct positive prime
  numbers of $\Z$ and choose $X_i \in R^{j\times j}$ with
  $\det(X_i) = p_i$ for $1 \leq i \leq w$.

  We construct a matrix graph $\Mat(R)$ which has a complete
  $(n-1)$-partite subgraph $H$ in which each partition has $m-1$
  vertices. The construction is analogous to the one in the proof of
  Theorem~\ref{thm:pdg-ramsey}.

  For each $1 \leq i \leq n - 1$, let $k_i = (i-1)(m - 1)$,
  $q_i = X_1X_2 \cdots X_{n_i}$ (hence $q_1 = I_j$ the identity matrix $j\times j$) and
  \begin{equation*}
    A_i = \{q_iX_{n_i + 1}, \ldots, q_iX_{k_i + (m - 1)}\}.
 \end{equation*}
 Note that $A_{1} = \{X_1, \ldots, X_{m-1}\}$. Since
 $\det(q_iX_{n_i+j}) = p_1\ldots p_{n_i}p_{n_i+j}$, it follows that the
 elements of $A_i$ are pairwise distinct and $|A_i| = m - 1$
for  $1 \leq i \leq n - 1$.

 Let $S = A_1\cup A_2 \cup \cdots \cup A_{n-1}$ and set
 $G = \Mat(\Z)$. Then for each $i$, the vertices in $A_i$ are
 independent. However, there are edges between all vertices of two
 distinct sets $A_i$ and $A_j$ with $i\neq j$.  Therefore, $G$ is a
 complete $(n-1)$-partite graph in which each partition has $m-1$
 vertices that are independent. Thus at most $m-1$ vertices of $G$ are
 independent. It is easily verified that the clique number of $G$ is
 $n-1$. It follows that $\matR(n, m) > w$ and together with
 Theorem~\ref{thm:poramsey} we conclude
 $\matR(n, m) = \poR(n, m) = w + 1 = (n -1)(m - 1) + 1$.
\end{proof}

\begin{corollary}\label{det-matrices}
  Let $R$ be a commutative ring, $j \geq 2$, $n$,
  $m \geq 1$ be positive integers ($n$, $m$ need not be distinct) and
  $S \subseteq \{X \in D \mid \det(X) \text{ is a proper element of }
  R\}$ such that $|S| \geq (n-1)(m-1) + 1$.

  Then one of the following assertions hold:
  \begin{enumerate}
  \item There are $n$ matrices $X_1$, \ldots, $X_n\in S$ such that
    $\det(X_1) \mid\mid \det(X_2) \mid\mid \cdots \mid\mid \det(X_n)$
    (in $R$).
  \item There are $m$ pairwise distinct matrices $Y_1$, \ldots,
    $Y_m\in S$, such that for all $1 \leq h \neq f \leq m$.
    \begin{itemize}
    \item $\det(Y_h) \nmid \det(Y_f)$ or
    \item $\det(Y_h) \mid \det(Y_f)$ and $\det(Y_f) \mid \det(Y_h)$
    \end{itemize}
    holds.
  \end{enumerate}
\end{corollary}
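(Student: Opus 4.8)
The plan is to apply the Ramsey bound to the subgraph of the matrix graph induced by $S$ and then translate the resulting graph-theoretic dichotomy back into the two divisibility statements. First I would recall, from the lemma preceding Theorem~\ref{matrices}, that the relation defined by $A \le B$ if and only if $A = B$ or $\det(A)\mid\mid\det(B)$ makes the set of matrices with proper determinant into a partially ordered set whose comparability graph is exactly $\Mat(R)$. Consequently the subgraph $H$ induced by $S$ is itself a partial order graph. (If $R$ is a field there are no proper elements of $R$, so the hypothesis $|S|\ge(n-1)(m-1)+1\ge 1$ cannot hold and the statement is vacuous; thus we may as well assume $R$ is not a field, so that $\Mat(R)$ is defined and lies in $\matclass$.) Since $|S|\ge(n-1)(m-1)+1$, Theorem~\ref{matrices} (equivalently, Theorem~\ref{thm:poramsey} applied to the partial order graph $H$) guarantees that $H$ contains either an $n$-clique $K_n$ or an independent set of $m$ vertices.

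In the clique case I would use the fact that a clique in a comparability graph is exactly a set of pairwise comparable elements, that is, a chain of the underlying partial order. Hence the $n$ vertices forming $K_n$ can be relabeled $X_1,\ldots,X_n$ so that $X_1 < X_2 < \cdots < X_n$ in the order $\le$, which unwinds precisely to $\det(X_1)\mid\mid\det(X_2)\mid\mid\cdots\mid\mid\det(X_n)$, giving assertion~(1). In the independent-set case I would take the $m$ vertices $Y_1,\ldots,Y_m$ and unfold the non-adjacency relation pairwise. For $h\ne f$, writing $a=\det(Y_h)$ and $b=\det(Y_f)$, non-adjacency means that neither $a\mid\mid b$ nor $b\mid\mid a$ holds. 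If $a\nmid b$, we are in the first bullet of assertion~(2); otherwise $a\mid b$, and then $b\nmid a$ would yield $a\mid\mid b$, contradicting non-adjacency, so $b\mid a$ holds as well, placing us in the second bullet. This establishes assertion~(2).

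The whole argument is a transcription of the proofs of Corollaries~\ref{cor:elements} and~\ref{ideals}, and the only step requiring genuine care is the bookkeeping in the independent-set case: the definition $a\mid\mid b$ (meaning $a\mid b$ together with $b\nmid a$) must be combined correctly with the negation of adjacency to reproduce exactly the stated dichotomy, rather than some weaker or stronger condition. The clique-to-chain translation is routine once one notes that comparability is total on any clique of a comparability graph, so no antisymmetry subtlety intervenes. I therefore expect no substantive obstacle beyond this verification.
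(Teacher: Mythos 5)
Your argument is correct and follows exactly the route the paper intends: the corollary is stated without proof as an immediate consequence of Theorem~\ref{matrices} (via the Ramsey bound of Theorem~\ref{thm:poramsey} applied to the induced subgraph on $S$), and your translation of cliques into chains for $\mid\mid$ and of independent sets into the stated divisibility dichotomy is precisely the required bookkeeping. Your handling of the edge cases (the field case making the hypothesis vacuous, and the correct negation of adjacency in the independent-set case) is careful and sound.
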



\subsection{Idempotents graphs of commutative
  rings}\label{subsec:idempotents}

\begin{definition}
  Let $R$ be a commutative ring.
  \begin{enumerate}
  \item We call $a\in R$ \emph{idempotent} if $a^2 = a$.
  \item We define the \emph{idempotents graph} $\Idm(R)$ of $R$ to be
    the undirected simple graph with the set of idempotents of $R$ as its
    vertex set and two distinct vertices $a$, $b$ are adjacent if and
    only if $a \mid b$ or $b \mid a$.

  \item By $\idemclass$ we denote the \emph{class of all idempotents graphs}.
  \end{enumerate}
\end{definition}

First, we show that the divisibility relation is a partial order on
the set of idempotent elements of $R$.

\begin{lemma}\label{ee}
  Let $R$ be a commutative ring and let $V$ be the
  set of all idempotent elements of $R$. We define $\leq$ on $V$ such
  that for all $a$, $b \in V$, we have $a\leq b$ if and only if
  $a \mid b$.

  Then $(V, \leq)$ is a partially ordered set and the graph $\Idm(R)$
  is a partial order graph.
\end{lemma}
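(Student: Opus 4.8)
The plan is to verify the three poset axioms for $(V,\le)$ and then identify $\Idm(R)$ with the partial order graph of this poset. Reflexivity is immediate since $a = 1\cdot a$ gives $a \mid a$, and transitivity is just the usual transitivity of divisibility: if $a \mid b$ and $b\mid c$, then $a \mid c$. Neither step uses that the elements are idempotent, so the only substantial point is antisymmetry, and it is precisely here that idempotence must be exploited.

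The key observation I would isolate first is that for idempotents the divisibility relation admits a multiplicative reformulation: for $a$, $b\in V$ one has $a\mid b$ if and only if $ab = b$. Indeed, if $b = ra$ for some $r\in R$, then commutativity together with $a^2 = a$ yields $ab = a(ra) = r a^2 = ra = b$; conversely, the equation $ab = b$ exhibits $b$ as a multiple of $a$. This equivalence is exactly the device that makes antisymmetry work, whereas for arbitrary ring elements divisibility is only a preorder (mutual divisibility gives associates rather than equality).

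Granting this reformulation, antisymmetry follows in one line: if $a \mid b$ and $b \mid a$ with $a$, $b$ idempotent, then $ab = b$ and $ba = a$, and since $R$ is commutative $ab = ba$, whence $a = b$. Therefore $(V,\le)$ is a partially ordered set. I expect this antisymmetry step to be the main (indeed only) obstacle, and the identity $a\mid b \iff ab = b$ is what overcomes it.

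Finally, to conclude that $\Idm(R)$ is a partial order graph, I would appeal directly to Definition~\ref{def:pographs}: the partial order graph $G_V$ of $(V,\le)$ has vertex set $V$ and joins distinct vertices $a$, $b$ by an edge precisely when $a\le b$ or $b\le a$, that is, when $a\mid b$ or $b\mid a$. This is verbatim the adjacency rule defining $\Idm(R)$, so $\Idm(R) = G_V$ and hence $\Idm(R)$ belongs to $\poclass$, as claimed.
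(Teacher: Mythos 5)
Your proof is correct and follows essentially the same route as the paper: reflexivity and transitivity are immediate, and antisymmetry is obtained from the identity $ab=b$ (equivalently $b-ab=(1-a)ay=ay-a^2y=0$ when $b=ay$), which is exactly the computation the paper performs, merely repackaged by you as the equivalence $a\mid b \iff ab=b$ for idempotent $a$. No gaps.
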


\begin{proof}
  Clearly, $\leq$ is reflexive and transitive. Suppose that $a\mid b$
  and $b \mid a$ (in $R$), that is, $a = bx$ and $b = ay$ for some
  $x$, $y \in R$. Then, since $a$ and $b$ are idempotent, we can
  conclude that
  \begin{align*}
    a - ba &= (1-b)a = (1-b)bx = bx - b^2x = bx - bx = 0 \text{ and } \\
    b - ab &= (1 - a)b = (1 - a)ay = ay-a^2y = ay -ay = 0
  \end{align*}
  and hence $a = ba = ab = b$ which implies that $\leq$ is
  anti-symmetric.
\end{proof}

By Lemma~\ref{ee}, it is clear that
$\idmR(n, m) \leq \poR(n, m)$. Next, we show that
$\idmR(n, m) = \poR(n, m)$. We start with the following lemma.

\begin{lemma}\label{maxidm}
  Let $R$ be a commutative ring and $E$ be a set of
  $w\ge 3$ distinct non-trivial idempotents of $R$ such that $eR$ is a
  maximal ideal of $R$ for every $e\in E$. Let $x = f_1f_2\cdots f_k$
  and $y = b_1b_2\cdots b_j$ such that
  $f_1$, \ldots, $f_k$, $b_1$, \ldots, $b_j \in E$ and $2 \leq k, j < w$.

  Then
  \begin{enumerate}
  \item $x \not = 0$.
  \item $x = y$ if and only if $\{f_1,\ldots, f_k\} = \{b_1,\ldots, b_j\}$.
  \end{enumerate}
\end{lemma}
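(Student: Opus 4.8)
The plan is to translate the maximality hypothesis into a statement about idempotent ring decompositions, deduce that the complementary idempotents $1-e$ for $e\in E$ are pairwise orthogonal, and then read off both assertions from a single product formula. The only genuine content lies in the orthogonality step; the two conclusions are then short computations.

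First I would record the structure behind the hypothesis. For any idempotent $e$ one has the internal decomposition $R = eR \oplus (1-e)R$, and the projection $r \mapsto (1-e)r$ induces a ring isomorphism $R/eR \cong (1-e)R$ (its kernel is exactly $eR$, and $1-e$ is the identity of the image). Thus $eR$ maximal is equivalent to $(1-e)R$ being a field. In particular, for every $e\in E$ the ring $(1-e)R$ is a field whose only idempotents are $0$ and $1-e$, and $1-e\neq 0$ since $e$ is non-trivial.

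The key step, which I expect to be the real obstacle, is to show that distinct elements of $E$ have orthogonal complements: for $e\neq f$ in $E$,
\[
(1-e)(1-f)=0,\qquad\text{equivalently}\qquad 1-ef=(1-e)+(1-f).
\]
To see this, observe that $(1-e)(1-f)$ is an idempotent lying in the ideal $(1-e)R$, so by the previous paragraph it equals $0$ or $1-e$; by the symmetric argument in $(1-f)R$ it equals $0$ or $1-f$. Were it nonzero it would equal both $1-e$ and $1-f$, forcing $e=f$, a contradiction; hence it is $0$. This is where the maximality of $eR$ is used, via the field structure and the absence of nontrivial idempotents in a field.

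With orthogonality in hand, set $g_e=1-e$ and let $I\subseteq E$ be the set of factors of $x$; by idempotency $x=\prod_{e\in I}e$ and $|I|\le k<w$. A short induction using $g_eg_{e'}=0$ for $e\neq e'$ collapses all higher-order terms and yields
\[
x=\prod_{e\in I}e=\prod_{e\in I}(1-g_e)=1-\sum_{e\in I}g_e.
\]
For assertion (1), choose $t\in E\setminus I$ (nonempty because $|I|<w$); multiplying by $g_t$ and using orthogonality gives $x\,g_t=g_t\neq0$, so $x\neq0$. For assertion (2), writing $J$ for the set of factors of $y$ gives $y=1-\sum_{e\in J}g_e$, so $x=y$ iff $\sum_{e\in I}g_e=\sum_{e\in J}g_e$; multiplying this identity by $g_t$ for an arbitrary $t\in E$ and using $g_t\neq0$ with orthogonality isolates membership, forcing $t\in I \iff t\in J$, i.e.\ $I=J$. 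The converse is immediate, which completes the plan; once the orthogonality relation is secured, everything else is bookkeeping.
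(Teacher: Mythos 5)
Your proof is correct, but it takes a genuinely different route from the paper's. The paper argues through primality: by Lemma~\ref{ee} (antisymmetry of divisibility on idempotents) the ideals $eR$, $e \in E$, are $w$ pairwise distinct maximal ideals, so for any $d \in E$ not among the factors of $x$ the product $x = f_1\cdots f_k$ lies outside the prime ideal $dR$ (no factor $f_i$ lies in $dR$ since $f_iR$ and $dR$ are distinct maximal ideals); this gives $x \neq 0$ at once, and assertion (2) follows by the same separation argument applied to an $f_1$ not among the $b_i$. You instead pass to the Peirce decomposition $R = eR \oplus (1-e)R$, identify $(1-e)R$ with the field $R/eR$, deduce that the complementary idempotents $1-e$ are pairwise orthogonal, and derive the closed formula $x = 1 - \sum_{e \in I}(1-e)$, from which both assertions drop out by multiplying with a suitable $1-t$. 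The paper's argument is shorter and needs nothing beyond ``maximal implies prime''; yours requires the extra orthogonality step but is more structural: it produces an explicit normal form for all such products and in effect shows that the subring generated by $E$ behaves like a finite product of fields, which makes assertion (2) a bookkeeping identity rather than a separation argument. Both proofs use the maximality hypothesis and the non-triviality of the idempotents in an essential way, so neither is more general than the other; your version is a valid and somewhat more informative alternative.
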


\begin{proof}
  (i) Since $e_1$, \ldots, $e_w$ are distinct non-trivial idempotents
  of $R$ and each $e_iR$ is a maximal ideal of $R$, $1 \leq i \leq w$,
  by Lemma~\ref{ee} we conclude that $e_1R$, \ldots, $e_wR$ are
  distinct maximal ideals of $R$.  Since $k < w$, there exists a
  maximal ideal $dR$ for some $d \in E$ such that
  $x = f_1f_2\cdots f_k \notin dR$ (note that each $f_iR$ is a maximal
  ideal of $R$). Thus $x \not = 0$.

  (ii) We may assume that $f_1 \not = b_i$ for every
  $1 \leq i \leq j$. Hence $x \in f_1R$ but $y \notin f_1R$ and thus
  $x \not = y$. Since all $f_i$ and $b_i$ are idempotent elements,
  multiplicities have no impact which makes the other implication
  obvious.
\end{proof}

\begin{theorem}\label{idm-graph}
  Let $n$, $m \geq 1$ be positive integers ($n$, $m$ need not be
  distinct). Then for the Ramsey number $\idmR(n,m)$ with respect to
  the class of idempotents graphs the following holds
  \begin{equation*}
    \idmR(n, m) = \poR(n, m) = (n - 1)(m - 1) + 1.
  \end{equation*}
\end{theorem}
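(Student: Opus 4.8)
The plan is to follow the same strategy as in the proof of Theorem~\ref{thm:pdg-ramsey}. Since $\Idm(R)$ is a partial order graph for every commutative ring $R$ by Lemma~\ref{ee}, the class $\idemclass$ is a subclass of $\poclass$, and hence $\idmR(n,m) \le \poR(n,m) = (n-1)(m-1)+1$ follows at once from Theorem~\ref{thm:poramsey}. It therefore remains to prove the reverse inequality $\idmR(n,m) > (n-1)(m-1)$. For $n = 1$ or $m = 1$ both sides equal $1$, so I assume $n,m\ge 2$ and set $w := (n-1)(m-1) \ge 1$. I would establish the lower bound by exhibiting one idempotents graph together with an induced subgraph $H$ on $w$ vertices which is a complete $(n-1)$-partite graph all of whose parts have $m-1$ vertices; such an $H$ has clique number $n-1$ and no independent set with more than $m-1$ vertices, hence contains neither a $K_n$ nor an independent set of size $m$.

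For the construction I would take $R = \Z_2^{\,W}$, the direct product of $W = \max\{w+1,3\}$ copies of the field $\Z_2$, and let $e_i$ be the coordinate idempotent that is $0$ in the $i$-th position and $1$ elsewhere. Each $e_i$ is a non-trivial idempotent and $e_iR$ is maximal (hence prime) because $R/e_iR \cong \Z_2$ is a field, so $E = \{e_1,\ldots,e_W\}$ satisfies the hypotheses of Lemma~\ref{maxidm}. Writing $e_J = \prod_{i\in J}e_i$ for $\emptyset\neq J\subseteq \first{W}$, Lemma~\ref{maxidm} guarantees that these products are non-zero and that $e_J = e_K$ forces $J = K$. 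The key point I would then verify is the divisibility description
\begin{equation*}
  e_J \mid e_K \iff J \subseteq K \qquad (\emptyset\neq J,K\subsetneq \first{W}).
\end{equation*}
The implication ``$\Leftarrow$'' is immediate since $e_K = e_J\cdot e_{K\setminus J}$; for ``$\Rightarrow$'' I would take $i\in J\setminus K$ and use that $e_iR$ is prime together with $e_ie_k\neq e_k$ for $k\in K$ (so $e_k\notin e_iR$) to get $e_K\notin e_iR$, contradicting $e_i\mid e_J\mid e_K$.

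With this divisibility equivalence in hand, the partite structure transfers verbatim from the perfect divisor graph. Mirroring Theorem~\ref{thm:pdg-ramsey}, I would set $k_i = (i-1)(m-1)$, $c_i = e_1e_2\cdots e_{k_i}$ (so $c_1 = 1$) and
\begin{equation*}
  A_i = \{c_ie_{k_i+1},\ldots, c_ie_{k_i+(m-1)}\}
\end{equation*}
for $1\le i\le n-1$, so that $A_i$ consists of the idempotents $e_J$ with $J = \first{k_i}\cup\{k_i+j\}$ of cardinality $k_i+1$. Two index sets of equal cardinality are incomparable under inclusion, so each $A_i$ is an independent set of size $m-1$; and for $i<i'$ the largest index $k_i+(m-1) = k_{i+1}\le k_{i'}$ shows that the index set of any vertex of $A_i$ is contained in that of any vertex of $A_{i'}$, so every vertex of $A_i$ is adjacent to every vertex of $A_{i'}$. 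Hence the subgraph $H$ of $\Idm(R)$ induced by $A_1\cup\cdots\cup A_{n-1}$ is complete $(n-1)$-partite with all parts of size $m-1$, giving $\idmR(n,m) > w$ and therefore equality with $\poR(n,m)$.

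The step I expect to be the main obstacle is the divisibility equivalence $e_J\mid e_K \iff J\subseteq K$: Lemma~\ref{maxidm} only supplies distinctness and non-vanishing of the products, so proving the ``$\Rightarrow$'' direction is what actually pins down the multipartite structure, and it is precisely here that the maximality (primeness) of the ideals $e_iR$ enters. A secondary technical point is the choice $W\ge w+1$, made so that no product $e_J$ with $J\subseteq\first{w}$ degenerates to $0$; the boundary case $w\le 1$ (that is, $n=m=2$), where $H$ is a single vertex, is then immediate.
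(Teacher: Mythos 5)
Your proposal is correct and follows essentially the same route as the paper: the paper also takes $R=\prod\Z_2$, uses the idempotent generators of the maximal ideals, and builds the identical complete $(n-1)$-partite subgraph $A_1\cup\cdots\cup A_{n-1}$ via Lemma~\ref{maxidm}. Your two refinements --- taking $w+1$ rather than $w$ copies of $\Z_2$ so that no product of the $e_i$ degenerates to $0$ (which can happen in the paper's setup when $m=2$), and explicitly proving the divisibility criterion $e_J\mid e_K\iff J\subseteq K$ that the paper leaves implicit --- only make the same argument more watertight.
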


\begin{proof}
  We set $w = (n-1)(m-1) \geq 1$ and show that $\idemclass$ contains
  an $(n-1)$-partite graph in which each partition consists of $m-1$
  independent vertices. For this purpose, set $R = \prod_{i=1}^w\Z_2$.
  It is clear that $R$ has exactly $w$ distinct maximal ideals, say
  $M_1$, \ldots , $M_w$, and each $M_i = p_iR$, $1 \leq i \leq w$ for
  idempotent $p_i$ of $R$. We set $E = \{p_1, p_2, \ldots,
  p_w\}$. Note that $|E| = w$ since $p_1$, $p_2$, \ldots, $p_w$ are
  pairwise distinct.

  For each $1 \leq i \leq n - 1$, let $n_i = (i-1)(m - 1)$,
  $a_i = p_1p_2 \cdots p_{n_i}$ (hence $a_1 = 1$) and
  $A_i = \{a_ip_{n_i + 1}, \ldots, a_ip_{n_i + (m - 1)}\}$. Note that
  $A_{1} = \{p_1, \ldots, p_{m-1}\}$.

  By construction of each $A_i$ and in light of Lemma~\ref{maxidm},
  for each $1 \leq i \leq n - 1$, we have $|A_i| = m - 1$ and the
  vertices of $A_i$ are independent.  Let $H$ be the subgraph of
  $\Idm(R)$ which is induced by
  $A_1\cup A_2 \cup \cdots \cup A_{n-1}$.

  By construction of $H$ and Lemma~\ref{maxidm}, we conclude that $H$
  is a complete $(n-1)$-partite graph in which each partition has
  $m-1$ vertices that are independent. Thus $H$ has exactly $m-1$
  vertices that are independent. It is easily verified that the clique
  number of $H$ is $n-1$. Thus $\idmR(n, m) > w$. Hence by
  Theorem~\ref{thm:poramsey}, we have
  $\idmR(n, m) = \poR(n, m) = w + 1 = (n -1)(m - 1) + 1$.
\end{proof}

\begin{remark}
  Observe that the ring $R =\prod_{i=1}^w\Z_2$ in the proof of
  Theorem~\ref{idm-graph} is a finite boolean ring. Let $\boolclass$
  denote the subclass of $\idemclass$ consisting of all idempotents
  graphs of boolean rings.

  In view of the proof of Theorem~\ref{idm-graph}, we conclude that
  $\booR(n, m) = \idmR(n, m)$. Thus we state this result without a
  proof.
\end{remark}

 \begin{theorem}\label{boolean}
   Let $n, m \geq 1$ be positive integers ($n, m$ need not be
   distinct).

   Then $\booR(n, m) = \idmR(n, m) = \poR(n, m) = (n - 1)(m - 1) + 1$.
 \end{theorem}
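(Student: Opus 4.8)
The plan is to derive Theorem~\ref{boolean} as an immediate consequence of Theorem~\ref{idm-graph} together with the observation already recorded in the preceding remark, namely that the extremal ring constructed in the proof of Theorem~\ref{idm-graph} happens to be boolean. First I would establish the upper bound $\booR(n,m) \le \idmR(n,m)$ by pure monotonicity of the Ramsey number under passing to a subclass. Since $\boolclass \subseteq \idemclass$, every graph in $\boolclass$ already lies in $\idemclass$; hence any number of vertices $r$ that forces a $K_n$ or an independent $m$-set in all induced subgraphs of graphs in $\idemclass$ does so a fortiori for the smaller class $\boolclass$. Consequently the minimal such $r$ can only decrease, giving $\booR(n,m) \le \idmR(n,m) = (n-1)(m-1)+1$ via Theorem~\ref{idm-graph}.

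For the matching lower bound I would simply revisit the construction in the proof of Theorem~\ref{idm-graph}. There the ring is $R = \prod_{i=1}^w \Z_2$ with $w = (n-1)(m-1)$, and every element of $R$ is idempotent because $x^2 = x$ holds coordinatewise in $\Z_2$; thus $R$ is a boolean ring and $\Idm(R)$ belongs to $\boolclass$, not merely to $\idemclass$. The very same argument as in Theorem~\ref{idm-graph} then exhibits an induced subgraph $H$ of $\Idm(R)$ on $w$ vertices that is complete $(n-1)$-partite with each part of size $m-1$, so $H$ contains neither an $n$-clique $K_n$ nor an independent set of size $m$. Because $\Idm(R) \in \boolclass$, this single witness already shows $\booR(n,m) > w = (n-1)(m-1)$, i.e.\ $\booR(n,m) \ge (n-1)(m-1)+1$.

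Combining the two bounds yields $\booR(n,m) = (n-1)(m-1)+1 = \poR(n,m) = \idmR(n,m)$, as claimed, which is precisely why the remark preceding the statement asserts the equality $\booR(n, m) = \idmR(n, m)$ without a separate proof. I do not anticipate any genuine obstacle here: the entire content is the subclass monotonicity of the Ramsey number in one direction and the already-noted fact that the extremal example of Theorem~\ref{idm-graph} is boolean in the other. The only point requiring explicit verification — and the closest thing to a ``hard part'' — is confirming that $\prod_{i=1}^w \Z_2$ is indeed a boolean ring, which is routine, so no new graph-theoretic or ring-theoretic machinery is needed beyond what has already been developed for the idempotents graph.
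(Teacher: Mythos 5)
Your proposal is correct and is exactly the argument the paper intends: the remark preceding the theorem justifies the statement by noting that $\boolclass\subseteq\idemclass$ gives $\booR(n,m)\le\idmR(n,m)$, while the extremal ring $R=\prod_{i=1}^{w}\Z_2$ from the proof of Theorem~\ref{idm-graph} is boolean, so the same witness graph yields the matching lower bound. No discrepancies.
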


 In view of Theorem~\ref{idm-graph}, we have the following result.
\begin{corollary}\label{idempotents}
  Let $n$, $m \geq 1$ be positive integers ($n$, $m$ need not be
  distinct), $k = (n-1)(m-1) + 1$ and $A$ be a subset of idempotent
  elements of $R$ such that $|A| \geq k$.

  Then one of the following assertions hold
  \begin{enumerate}
  \item There are $n$ pairwise distinct elements (distinct idempotents) $a_1$,
    \ldots, $a_n\in A$ such that $a_1\mid a_2 \mid \cdots \mid a_n$
    (in $R$).
  \item There are $m$ pairwise distinct elements (distinct
    idempotents) $b_1$, \ldots, $b_m\in A$ such that $b_h \nmid b_f$
    (in $R$) for all $1 \leq h \neq f \leq m$.
  \end{enumerate}
\end{corollary}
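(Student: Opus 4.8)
The plan is to read the statement off directly from the Ramsey number computed in Theorem~\ref{idm-graph}, applied to the idempotents graph $\Idm(R)$. First I would record the order-theoretic setup: $\Idm(R)$ belongs to the class $\idemclass$, and by Lemma~\ref{ee} the divisibility relation on the idempotent elements of $R$ is a partial order whose comparability graph is exactly $\Idm(R)$. In particular, two distinct idempotents are adjacent in $\Idm(R)$ precisely when one divides the other, and by antisymmetry two distinct comparable idempotents $a$, $b$ satisfy exactly one of $a \mid b$ or $b \mid a$.

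Next I would pass to the subgraph $H$ of $\Idm(R)$ induced by the vertex set $A$. Since $|A| \geq k = (n-1)(m-1)+1$ and, by Theorem~\ref{idm-graph}, $\idmR(n,m) = (n-1)(m-1)+1$, the defining property of the Ramsey number (Definition~\ref{def:pographs}) guarantees that $H$ contains either a complete $n$-clique $K_n$ or an independent set of $m$ vertices. Everything then comes down to translating these two graph-theoretic alternatives into the divisibility statements~(1) and~(2).

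If $H$ contains a $K_n$ on vertices $a_1, \ldots, a_n$, then these $n$ idempotents are pairwise comparable under $\mid$; a set of pairwise comparable elements of a partially ordered set is a chain, so after relabeling we obtain $a_1 \mid a_2 \mid \cdots \mid a_n$ with all $a_i$ distinct, which is assertion~(1). If instead $H$ contains an independent set $\{b_1, \ldots, b_m\}$, then these idempotents are pairwise non-adjacent, hence pairwise incomparable, so $b_h \nmid b_f$ for all $1 \leq h \neq f \leq m$, which is assertion~(2).

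I do not expect a serious obstacle. The only point requiring care is the clique-to-chain translation: one must invoke antisymmetry of the divisibility order on idempotents (Lemma~\ref{ee}) to conclude that a clique in the comparability graph is \emph{totally} ordered and therefore yields a genuine strictly increasing divisibility chain of distinct elements, rather than merely a set of pairwise comparable elements.
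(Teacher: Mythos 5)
Your proposal is correct and follows exactly the route the paper intends: the paper states Corollary~\ref{idempotents} as an immediate consequence of Theorem~\ref{idm-graph} without writing out a proof, and your argument simply supplies the routine details (restrict to the induced subgraph on $A$, apply the Ramsey bound, and translate cliques to divisibility chains via the antisymmetry established in Lemma~\ref{ee} and independent sets to pairwise non-divisibility).
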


\section{An example class $\mathcal{C}$ of partial order graphs with
  $\RamseyR_{\mathcal{C}}(n,m) \not =
  \RamseyR_{\mathcal{C}}(m,n)$}\label{sec:unsymmetric}

In this section, we present a subclass $\calC$ of $\pdgclass$ with
respect to which the Ramsey numbers $\RamseyC$ are non-symmetric in
$m$ and $n$. We recall the following definition \cite{KT2}.
\begin{definition}
	A subset $S$ of a ring R is called a \emph{positive semi-cone} of $R$ if $S$  satisfies the following conditions:
	\begin{enumerate}
		\item $S\cap (-S) = \{0\}$.
		\item $S + S \subseteq S$.
		\item $S\cdot S \subseteq S$.
	\end{enumerate}
	If $S$ satisfies the above conditions and $S\cup (-S) = R$, then $S$ is called a \emph{positive cone} of $R$ \cite{KT}.
\end{definition}
	
For a positive semi-cone $S$ of $R$, define $\leq_S$ on $R$ such that
for all $a$, $b\in R$, we have $a\leq_S b$ if and only if
$b - a \in S$. Then $(R, \leq_S)$ is a partially ordered set. We
define the \emph{$S$-positive semi-cone graph} $\cone_S(R)$ of $R$ to
be the simple, undirected graph with vertex set $R$ such that two
vertices $a$, $b$ are connected by an edge if and only if $b -a \in S$
or $a - b \in S$. Then $\cone_S(R)$ is a partial ordered graph.

\begin{definition}  For $k \ge 2$, let $P_k = \{0, k, 2k, 3k, \ldots\} = k\N_0$.
  \begin{enumerate}
  \item For $a$, $b\in \Z$ we define $a \le_k b$ if and only if
    $b - a \in P_k$.
  \item We define the \emph{$k$-positive semi-cone graph} $\cone_k(\Z)$ of $\Z$ to be
    the simple, undirected graph with vertex set $\Z$ such that two
    vertices $a$, $b \in R$ are connected by an edge if and only if $|a-b|\in P_k$.
    \item For every positive integer $k \geq 2$, let $\coneR(n,m)$ to be the minimal number of
    vertices $r$ such that every induced subgraph of the 
    partial order graph $\cone_k(\Z)$ consisting of $r$ vertices contains either a
    complete $n$-clique $K_n$ or an independent set consisting of $m$
    vertices. 
   \end{enumerate}
\end{definition}

\begin{remark}
	  \begin{enumerate}
  
  \item For every $k\geq 2$, $P_k$ is a positive semi-cone subset of $\Z$ that is not a positive cone of $\Z$. The relation $a \le_k b$ if and only if $b - a\in P_k$ is a partial order on $\Z$ and $\cone_k(\Z)$ is a partial order graph.
  \item For every $k\geq 2$, then two vertices $a$, $b$ of $\cone_k(\Z)$ are connected by an edge
    if and only if $a\equiv b \ (\mathrm{mod} \ \ k)$.
  \end{enumerate}
\end{remark}

For each $k \geq 2$, the following theorem shows that $\coneR(n, m)$ is not always
symmetric in $m$ and $n$.

\begin{theorem}\label{fun} Let $k \geq 2$, $n$, $m \geq 1$ be positive
  integers ($n$, $m$ need not be distinct). Then
  \begin{enumerate}
  \item\label{fun-1} If $1 \leq m \leq k+1$, then
    \begin{equation*}
      \coneR(n, m) = (n - 1)(m - 1) + 1.
    \end{equation*}
    In particular, if $1 \leq n, m \leq k+1$, then
    $\coneR(n, m) = \coneR(m, n) = (n -1)(m - 1) + 1$ is symmetric in
    $n$ and $m$.
  \item\label{fun-2} If $m > k+1$, then
    \begin{equation*}
      \coneR(n, m) = \coneR(n, k+1) = (n - 1)k + 1
    \end{equation*}
    only depends on the first argument $n$. In particular, assume that $n \not = m$. If $n > k+1$ or $m > k+1$, then
    $\coneR(n, m) \not = \coneR(m, n)$.
\end{enumerate}
\end{theorem}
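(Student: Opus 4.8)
The plan is to exploit the very transparent structure of the graph $\cone_k(\Z)$. By the preceding remark, two distinct vertices $a$, $b$ are adjacent if and only if $a\equiv b \pmod{k}$, so $\cone_k(\Z)$ is the disjoint union of the $k$ residue classes modulo $k$, each of which is an infinite clique with no edges to the others. From this I would extract two clean combinatorial facts about an arbitrary induced subgraph $H$. First, since every clique of $\cone_k(\Z)$ lies inside a single residue class, $H$ contains a copy of $K_n$ if and only if some residue class contributes at least $n$ vertices to $H$. Second, since every independent set meets each residue class in at most one vertex, $H$ contains an independent set of size $m$ if and only if the vertices of $H$ occupy at least $m$ distinct residue classes. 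In particular the independence number of any induced subgraph is capped at $k$, and this cap is exactly the feature responsible for the eventual asymmetry.

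Using these two facts I would first determine the largest induced subgraph $H$ that avoids both a $K_n$ and an independent set of size $m$. Avoiding $K_n$ forces at most $n-1$ vertices in each residue class, while avoiding an independent set of size $m$ forces the vertices to lie in at most $m-1$ classes; but only $k$ classes exist, so at most $\min(k,m-1)$ of them may be used. Hence such an $H$ has at most $(n-1)\min(k,m-1)$ vertices, and this value is attained by choosing $\min(k,m-1)$ residue classes and exactly $n-1$ integers from each, which gives the lower bound. A one-line pigeonhole argument supplies the matching upper bound: once $(n-1)\min(k,m-1)+1$ vertices are present, either some class already holds $n$ of them (a $K_n$) or every class holds at most $n-1$, forcing more than $\min(k,m-1)$ occupied classes and hence one of the two required substructures. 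This yields the unifying closed form
\begin{equation*}
  \coneR(n,m) = (n-1)\min(k,m-1)+1.
\end{equation*}

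Parts~\eqref{fun-1} and~\eqref{fun-2} then fall out by evaluating $\min(k,m-1)$. If $m\le k+1$ then $\min(k,m-1)=m-1$ and we recover $(n-1)(m-1)+1$; here I would also remark that the upper bound is already immediate from $\cone_k(\Z)\in\poclass$ together with Theorem~\ref{thm:poramsey}, namely $\coneR(n,m)\le\poR(n,m)=(n-1)(m-1)+1$, so only the extremal construction is genuinely new. If instead $m>k+1$ then $\min(k,m-1)=k$, giving $(n-1)k+1$, which is precisely the value of Part~\eqref{fun-1} at $m=k+1$, i.e.\ $\coneR(n,k+1)$; in this regime the independence number never reaches $m$, so the independent-set alternative is vacuous and a $K_n$ is forced purely by counting.

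Finally, for the non-symmetry I would compare the closed form with its transpose $\coneR(m,n)=(m-1)\min(k,n-1)+1$. When both $n,m\le k+1$ the two reduce to $(n-1)(m-1)+1$, giving the symmetric ``in particular'' of Part~\eqref{fun-1}. Otherwise, assuming $n\neq m$ and, say, $m>k+1$, a direct computation of $(n-1)\min(k,m-1)-(m-1)\min(k,n-1)$ settles it: if $n>k+1$ as well the two sides are $(n-1)k$ and $(m-1)k$, which differ since $k\ge 2$ and $n\neq m$; if $n\le k+1$ the difference equals $(n-1)(k+1-m)$, which is nonzero (for $n\ge 2$). I expect the only real obstacle to be bookkeeping rather than depth: one must consistently track that there are exactly $k$ residue classes, since it is precisely this cap on the independence number---and not on the clique number---that makes $\min(k,m-1)$ and $\min(k,n-1)$ behave differently and thereby breaks the symmetry enjoyed by the full class $\poclass$.
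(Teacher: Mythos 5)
Your proof is correct and follows essentially the same route as the paper: your extremal configuration (pick $\min(k,m-1)$ residue classes and $n-1$ vertices in each) is exactly the paper's construction $A_1\cup\cdots\cup A_{m-1}$, and the observation that the independence number of $\cone_k(\Z)$ is capped at $k$ is precisely what the paper uses to reduce part~\eqref{fun-2} to part~\eqref{fun-1}. The differences are presentational but mildly in your favor. You package both cases into the single closed form $\coneR(n,m)=(n-1)\min(k,m-1)+1$ and prove the upper bound by a direct pigeonhole on the $k$ residue classes, whereas the paper imports the upper bound in part~\eqref{fun-1} from $\coneR(n,m)\le\poR(n,m)$ together with Theorem~\ref{thm:poramsey}; both are valid, yours being self-contained and the paper's shorter. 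You also carry out the non-symmetry computation explicitly, which the paper dispatches with ``the assertion now follows from~\eqref{fun-1}.'' In doing so you correctly flag the one genuine wrinkle: for $n=1$ the difference $(n-1)(k+1-m)$ vanishes, and indeed $\coneR(1,m)=\coneR(m,1)=1$, so the final ``in particular'' claim of part~\eqref{fun-2} fails in the degenerate case $n=1$, $m>k+1$ (the hypotheses $n\neq m$ and $m>k+1$ are then satisfied, yet the two Ramsey numbers coincide). This is a defect of the theorem statement rather than of your argument --- the non-symmetry assertion implicitly needs $n,m\ge 2$ --- and your parenthetical ``for $n\ge 2$'' is exactly the right caveat.
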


\begin{proof}
  \eqref{fun-1}: For $n=1$ or $m=1$, the assertion immediately
  follows, so we assume $n\ge 2$ and $2 \le m\le k + 1$. For each
  $1 \leq i \leq m - 1$, let

  \begin{equation*}
    A_i = \{k + i, 2k + i, \ldots, (n-1)k + i\}
  \end{equation*}
  By construction, each $A_i$ contains $n-1$ distinct elements $a$
  with $a-i \in P_k$. Therefore for $a\neq b\in A_i$, either
  $b-a\in P_k$ or $a-b\in P_k$ and hence each $A_i$ induces a complete
  subgraph of $\cone_k(\Z)$ with exactly $n -1$ vertices. Moreover, since
  $m-1 \le k$, for $a\in A_i$ and $b\in A_j$ with
  $1 \le i \neq j \le m-1$, then $a\not\equiv b \ \ (\mathrm{mod} \ \ k)$ and
  therefore $a$ and $b$ are not connected by an edge.

  Let $H$ be the subgraph of $\cone_k(\Z)$ which is induced by the vertex
  set $A_1 \cup \cdots \cup A_{m-1}$.  Then $H$ is disjoint union of
  $m-1$ $(n-1)$-cliques and hence does neither contain an $n$-clique
  nor an independent set of cardinality $m$ which implies that
  $\coneR(n, m) > (n-1)(m-1)$. It now follows from
  Theorem~\ref{thm:poramsey} that $\coneR(n, m) = (n -1)( m- 1) + 1$.

  The symmetry assertion follows immediately from this if, moreover,
  $1 \le n \le k+1$ holds.

  \eqref{fun-2}: Recall that two vertices $a$, $b$ of $\cone_k(\Z)$ are
  connected by an edge if and only if $a \equiv b \ (\mathrm{mod} \ \ k)$. Therefore, a maximal independent subset has cardinality $k$
  (the number of residue classes modulo $k$).  Thus if $m \ge k+1$, then
  $\cone_k(\Z)$ cannot contain an independent set with $m$ distinct
  vertices. Therefore, for all $m\ge k+1$, the equality
  \begin{equation*}
    \coneR(n,m) = \coneR(n,k+1)
  \end{equation*}
  holds and the assertion now follows from~\eqref{fun-1}.
\end{proof}

In view of Theorem~\ref{fun}, we have the following result.
\begin{corollary}\label{funZ}
  Let $k \geq 2$ and $n$, $m\ge 1$ be positive integers ($n$, $m$ need not
  be distinct) and $A$ be a subset of $\Z$. Then
\begin{enumerate}
\item If $2 \leq m \leq k + 1$ and $|A| > (n - 1)(m - 1)$, then there are
  at least $n$ pairwise distinct elements $a_1$, \ldots, $a_n \in A$
  such that $a_1 \equiv \cdots \equiv a_n \ (\mathrm{mod} \ \  k)$ or there at least
  $m$ elements $b_1$, \ldots, $b_m \in A$ such that
  $b_i \not \equiv b_j \ (\mathrm{mod} \ \  k)$ for all $1 \leq i \neq j \leq m$.

\item If $m > k + 1$ and $|A| > (n - 1)k$, then there are at least $n$
  pairwise distinct elements of $A$, say $a_1$, \ldots, $a_n$ such
  that $a_1 \equiv \cdots \equiv a_n \ (\mathrm{mod} \ \ k)$.
\end{enumerate}
\end{corollary}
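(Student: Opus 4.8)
The plan is to read Corollary~\ref{funZ} directly off Theorem~\ref{fun} by translating the two alternatives of the Ramsey property back into congruence conditions. The dictionary I need is supplied by the Remark preceding Theorem~\ref{fun}: two vertices $a$, $b$ of $\cone_k(\Z)$ are adjacent if and only if $a \equiv b \pmod{k}$. Consequently, a clique in $\cone_k(\Z)$ is precisely a set of integers lying in a common residue class modulo $k$, while an independent set is precisely a set of integers that are pairwise incongruent modulo $k$. Thus an $n$-clique in the subgraph induced by a set $A \subseteq \Z$ furnishes $n$ pairwise distinct elements $a_1, \ldots, a_n \in A$ with $a_1 \equiv \cdots \equiv a_n \pmod{k}$, and an independent set of $m$ vertices furnishes $m$ elements $b_1, \ldots, b_m \in A$ with $b_i \not\equiv b_j \pmod{k}$ for all $i \neq j$.

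For part~(1), I would invoke Theorem~\ref{fun}\eqref{fun-1}, which gives $\coneR(n,m) = (n-1)(m-1)+1$ under the hypothesis $2 \le m \le k+1$. Since cardinalities are integers, the assumption $|A| > (n-1)(m-1)$ is exactly $|A| \ge \coneR(n,m)$, so by the definition of $\coneR$ the subgraph of $\cone_k(\Z)$ induced by any $\coneR(n,m)$ vertices of $A$ contains either an $n$-clique or an independent set of $m$ vertices. Translating these two alternatives through the dictionary above yields precisely the two stated conclusions.

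For part~(2), I would instead use Theorem~\ref{fun}\eqref{fun-2}, which gives $\coneR(n,m) = (n-1)k+1$ when $m > k+1$. Again $|A| > (n-1)k$ means $|A| \ge \coneR(n,m)$, so the induced subgraph contains an $n$-clique or an independent set of $m$ vertices. The extra observation here is that an independent set in $\cone_k(\Z)$ consists of integers in pairwise distinct residue classes modulo $k$, of which there are only $k$ available; since $m > k+1 > k$, no such set of size $m$ can exist. Hence the $n$-clique alternative must hold, yielding the single asserted conclusion.

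There is essentially no obstacle beyond bookkeeping, as the content is entirely contained in Theorem~\ref{fun} together with the adjacency criterion. The only point requiring any care is the pigeonhole-style observation in part~(2) that at most $k$ residue classes are available; this is what rules out the independent-set alternative and collapses the dichotomy to a single statement.
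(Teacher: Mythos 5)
Your proposal is correct and is exactly the derivation the paper intends: the corollary is stated "in view of Theorem~\ref{fun}" with no separate proof, and your translation of cliques and independent sets in $\cone_k(\Z)$ into common residue classes and pairwise incongruent elements, together with the observation that only $k$ residue classes exist (ruling out the independent-set alternative in part~(2)), is the whole content. Nothing is missing.
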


\begin{example}
  The induced subgraph $H$ of $\cone_3(\Z)$ with vertex set $V = \{1,2,3,\ldots, 12\}$
  consists of three $4$-cliques. Since $|V| = 12$, $H$ satisfies  $\coneZ(12, 2)$, $\coneZ(4, 4)$, $\coneZ(6, 3)$, $\coneZ(5, 3)$, and $\coneZ(4, 10)$.  
  \begin{figure}[H]
    \centering
    \begin{tikzpicture}
  \tikzstyle{vertex}=[circle, fill=black, inner sep = 2,  draw,text=black]

  \node (shift) at (3.5,0) {};
  
  \node[style=vertex, label=below:$1$](p1) at (0,0) {};
  \node[style=vertex, label=left:$4$](p2) at  (-1,1) {};
  \node[style=vertex, label=right:$7$](p3) at  (1,1) {};
  \node[style=vertex, label=above:$10$](p4) at  (0,2) {};

  \draw (p1) -- (p2);
  \draw (p1) -- (p3);
  \draw (p1) -- (p4);
  \draw (p2) -- (p3);
  \draw (p2) -- (p4);
  \draw (p3) -- (p4);

  \node[style=vertex, label=below:$2$](q1) at ($(p1) + (shift)$) {};
  \node[style=vertex, label=left:$5$](q2) at  ($(p2) + (shift)$) {};
  \node[style=vertex, label=right:$8$](q3) at ($(p3) + (shift)$) {};
  \node[style=vertex, label=above:$11$](q4) at ($(p4) + (shift)$) {};

  \draw (q1) -- (q2);
  \draw (q1) -- (q3);
  \draw (q1) -- (q4);
  \draw (q2) -- (q3);
  \draw (q2) -- (q4);
  \draw (q3) -- (q4);
   
  \node[style=vertex, label=below:$3$](r1) at ($(p1) + 2*(shift)$) {};
  \node[style=vertex, label=left:$6$](r2) at  ($(p2) + 2*(shift)$) {};
  \node[style=vertex, label=right:$9$](r3) at ($(p3) + 2*(shift)$) {};
  \node[style=vertex, label=above:$12$](r4) at ($(p4) + 2*(shift)$) {};

  \draw (r1) -- (r2);
  \draw (r1) -- (r3);
  \draw (r1) -- (r4);
  \draw (r2) -- (r3);
  \draw (r2) -- (r4);
  \draw (r3) -- (r4);

\end{tikzpicture}

    \caption{Subgraph $H$ of $\cone_3(\Z)$ induced by the vertices $V = \{1,2,3,\ldots, 12\}$}
    \label{fig:cone-1}
  \end{figure}
\end{example}

\FloatBarrier

\section*{Acknowledgments}
Ayman Badawi is supported by the American University of Sharjah Research Fund (FRG2019): AS1602.

R.~Rissner is supported by the Austrian Science Fund (FWF): P~28466.
\vskip0.1in
We would like to thank the referees for a careful reading of
the paper.

\providecommand{\bysame}{\leavevmode\hbox to3em{\hrulefill}\thinspace}
\providecommand{\MR}{\relax\ifhmode\unskip\space\fi MR }
\providecommand{\MRhref}[2]{%
  \href{http://www.ams.org/mathscinet-getitem?mr=#1}{#2}
}
\providecommand{\href}[2]{#2}

\end{document}